\tikzset{mytree/.style={grow'=up, level distance=6mm}, sibling distance=4mm}
\numberwithin{figure}{section}
\numberwithin{equation}{section}
\numberwithin{table}{section}
\numberwithin{mycounter}{section}
\NewDocumentCommand{\declthm}{m m O{plain}}{% env, name, style
  \declaretheorem[
  name = #2,
  sibling = mycounter,
  style = #3
  ]
  {#1}
}
\newcommand{\K}{\Bbbk}
\newcommand{\Q}{\mathbb{Q}}
\newcommand{\R}{\mathbb{R}}
\newcommand{\Z}{\mathbb{Z}}
\newcommand{\PP}{\mathsf{P}}
\newcommand{\QQ}{\mathsf{Q}}
\newcommand{\CC}{\mathsf{C}}
\newcommand{\II}{\mathsf{I}}
\newcommand{\Com}{\mathsf{Com}}
\newcommand{\Lie}{\mathsf{Lie}}
\newcommand{\Ass}{\mathsf{Ass}}
\newcommand{\Pois}{\mathsf{Pois}}
\newcommand{\Disk}{\mathsf{Disk}}
\newcommand{\LL}{\mathsf{L}}
\newcommand{\U}{\mathcal{U}}
\newcommand{\GG}{\mathsf{G}}
\newcommand{\ashk}{\textnormal{¡}}
\newcommand{\uni}{{\makebox[1ex]{\tikz[scale=0.25]{ \draw (0,0) -- (0,1); \fill (0,1) circle (0.2); }}}}
\newcommand{\st}[1]{\mathop{\star_{#1}}}
\newcommand{\hst}[1]{\mathop{\hat{\star}_{#1}}}
\DeclareMathOperator{\id}{id}
\DeclareMathOperator{\Tw}{Tw}
\DeclareMathOperator{\Free}{Free}
\DeclareMathOperator{\Hom}{Hom}
\DeclareMathOperator{\im}{im}
\newcommand{\fr}{\mathrm{fr}}
\author{Najib Idrissi\thanks{Université Paris Cité and Sorbonne Université, CNRS, IMJ-PRG, F-75013 Paris, France. Email: najib.idrissi-kaitouni@u-paris.fr}}
\title{Curved Koszul duality of algebras over unital versions of binary operads}
\date{August 22, 2022}
\begin{document}

\maketitle

\begin{abstract}
  We develop a curved Koszul duality theory for algebras presented by quadratic-linear-constant relations over unital versions of binary quadratic operads.
  As an application, we study Poisson $n$-algebras given by polynomial functions on a standard shifted symplectic space.
  We compute explicit resolutions of these algebras using curved Koszul duality.
  We use these resolutions to compute derived enveloping algebras and factorization homology on parallelized simply connected closed manifolds with coefficients in these Poisson $n$-algebras.
\end{abstract}

\tableofcontents

\section{Introduction}

Koszul duality was initially developed by Priddy~\cite{Priddy1970} for associative algebras.
Given an augmented associative algebra $A$, there is a Koszul dual (dg-)algebra $A^{!}$, and there is an equivalence (subject to some conditions) between parts of the derived categories of $A$ and $A^{!}$.
The Koszul dual $A^{!}$ is actually the linear dual of a certain coalgebra $A^{\ashk}$ up to suspension.
If the algebra $A$ satisfies what is called the Koszul property, then the cobar construction of that Koszul dual coalgebra $A^{\ashk}$ is a quasi-free resolution of the algebra $A$.
In this sense, Koszul duality is a tool to produce resolutions of algebras.

An operad governs categories of ``algebras'' in a wide sense, for example associative algebras, commutative algebras, or Lie algebras.
After insights of Kontsevich~\cite{Kontsevich1993a}, Koszul duality was generalized with great success to binary quadratic operads by Ginzburg--Kapranov~\cite{GinzburgKapranov1994} (see also Getzler--Jones~\cite{GetzlerJones1994}), and then to quadratic operads by Getzler~\cite{Getzler1995} (see also~\cite{Fresse2004,Markl1996}).
It was for example realized that the operad governing commutative algebras and the operad governing Lie algebras are Koszul dual to each other.
This duality explains the links between the approaches of Sullivan~\cite{Sullivan1977} and Quillen~\cite{Quillen1969} to rational homotopy theory, which rely respectively on differential graded (dg-) commutative algebras and dg-Lie algebras.
Koszul duality of operads works (mutatis mutandis) like it does for associative algebras.
Given an augmented quadratic operad $\PP$, there is a Koszul dual (dg-)cooperad $\PP^{\ashk}$.
If $\PP$ is Koszul, then the operadic cobar construction of $\PP^{\ashk}$ is a quasi-free resolution of the operad $\PP$.

Operadic Koszul duality was then generalized to several different settings (see Section~\ref{sec:kosz-dual-oper}) and two of them will interest us.
The first, due to Hirsh--Millès~\cite{HirshMilles2012}, is curved Koszul duality applied to (pr)operads with quadratic-linear-constant relations, by analogy with curved Koszul duality for associative algebras~\cite{Positselskiui1993,PolishchukPositselski2005}.
The other, due to Millès~\cite{Milles2012}, is Koszul duality for ``monogenic algebras'' over quadratic operads, a generalization of quadratic algebras over binary operads.
(Note that despite the name, the monogenic algebras defined in~\cite{Milles2012} are not generated by a single element but are rather special kinds of presentations, see Section~\ref{sec:kosz-dual-quadr}.)

Our aim will be to combine, in some sense, the approaches of Millès~\cite{Milles2012} and Hirsh--Millès~\cite{HirshMilles2012} to develop a curved Koszul duality theory for algebras with quadratic-linear-constant relations over unital versions of binary quadratic operads.
The general philosophy is that, since operads are monoids in the category of symmetric sequences, the results of Hirsh--Millès~\cite{HirshMilles2012} are results about the unital associative (colored) operad, which is itself an operad with QLC relations.
With this point of view, we reuse the ideas of Millès~\cite{Milles2012} to define curved Koszul duality for algebras over any operad.

Let us now state our theorem.
We will work over unital versions of quadratic operads that will be made precise in Definition~\ref{def:unital-version}, and our algebras will have quadratic-linear-constant presentations as seen in Definition~\ref{def:alg-qlc}.
The Koszul dual $A^{\ashk}$ of such an algebra $A$ will be defined in Section~\ref{sec:kosz-dual-coalg}, and it will use the quadratic reduction $qA$ of Definition~\ref{def:quad-red}.

\begin{theorem}[Theorem~\ref{thm:main}]
  Let $\PP$ be a binary quadratic operad and $u\PP$ be a unital version of $\PP$.
  Let $A$ be a $u\PP$-algebra with a quadratic-linear-constant presentation and $qA$ be its quadratic reduction and $A^{\ashk} = (qA^{\ashk}, d_{A^{\ashk}}, \theta_{A^{\ashk}})$ be the curved $\PP^{\ashk}$-coalgebra given by its Koszul dual.

  If the $\PP$-algebra $qA$ is Koszul in the sense of Millès~\cite{Milles2012}, then the canonical morphism $\Omega_{\kappa}A^{\ashk} \xrightarrow{\sim} A$ is a quasi-isomorphism of $u\PP$-algebras.
\end{theorem}

Our motivation is that if $\PP$ is a Koszul operad, then there is a functorial way of obtaining resolutions of $\PP$-algebras by considering the bar-cobar construction.
However, this resolution is big, and explicit computations are not always easy.
On the other hand, the theory of Millès~\cite{Milles2012} provides resolutions for Koszul monogenic algebras over Koszul quadratic operads which are much smaller when they exist (cf.\ Remark~\ref{rmk:big}), but the construction is unavailable when the operad is not quadratic and/or when the algebra is not monogenic.
Our theorem allows us to construct resolutions of non-monogenic algebras over non-quadratic operads.

Note that by applying Theorem~\ref{thm:main} theory to different kinds of operads, we recover some already existing notions of ``curved algebras'' and ``Koszul duality of curved algebras''.
For example, when applied to associative algebras, we recover the notion of a curved coalgebra of Lyubashenko~\cite{Lyubashenko2017}, and when applied to Lie algebras, we recover (the dual of) curved Lie algebras~\cite{ChuangLazarevMannan2016,Maunder2017}.

Our main application of Theorem~\ref{thm:main} will be the study of unital Poisson $n$-algebra.
For $n \in \Z$ and $D \ge 0$, consider the Poisson $n$-algebra $A_{n;D} = \R[x_{1}, \dots, x_{D}, \xi_{1}, \dots, \xi_{D}]$, where $\deg x_{i} = 0$, $\deg \xi_{j} = 1-n$ and the shifted Lie bracket is given by $\{x_{i}, \xi_{j}\} = \delta_{ij}$.
We may view $A_{n;D}$ as the algebra $\mathscr{O}_{\mathrm{poly}}(T^{*}\R^{D}[1-n])$ of polynomial functions on the shifted cotangent space of $\R^{D}$, with $x_{i}$ being a coordinate function on $\R^{D}$, and $\xi_{j}$ being the vector field $\partial / \partial x_{j}$.
This algebra admits a presentation with quadratic-linear-constant relations over the operad $u\Pois_{n}$ governing unital Poisson $n$-algebra.
We prove that it is Koszul, and thus that the cobar construction on its Koszul dual $\Omega_{\kappa}A^{\ashk}$ provides a cofibrant replacement of $A$ and we moreover explicitly describe that cofibrant replacement.

We then use $\Omega_{\kappa}A^{\ashk}$ to compute the derived enveloping algebra of $A_{n;D}$, which we prove is quasi-isomorphic to the underived enveloping algebra of $A$.
We also compute the factorization homology $\int_{M} A_{n;D}$ of a simply connected parallelized closed manifold $M$ with coefficients in $A_{n;D}$ and we prove that the homology of $\int_{M} A_{n;D}$ is one-dimensional for such manifolds (Proposition~\ref{prop:final-result}).
This fits in with the physical intuition that the expected value of a quantum observable, which should be a single number, lives in $\int_{M} A$, see e.g., \cite{CostelloGwilliam2017} for a broad reference.

Note that a computation for a similar object was performed by Markarian~\cite{Markarian2017}.
Moreover, shortly after the first version of this paper appeared, Döppenschmitt~\cite{Doeppenschmitt2018} released a preprint containing the computation of the factorization homology of a twisted version of $A_{n;D}$, using physical methods.
See Remark~\ref{rem:markarian}.

\paragraph{Outline}

In Section~\ref{sec:backgr-recoll}, which does not contain any original results, we lay out our conventions and notations as well as background for the rest of the paper.
We give a quick tour of Koszul duality (Section~\ref{sec:kosz-dual-oper}), recall the definition of ``unital version'' of a quadratic operad (Section~\ref{sec:unital-versions}), and give some background on factorization homology (Section~\ref{sec:fact-homol}).
In Section~\ref{sec:curved-coalgebras}, we define curved coalgebras and semi-augmented algebras and bar/cobar constructions suitable for our applications and we prove that they are adjoint to each other.
In Section~\ref{sec:kosz-unit-algebra}, we define algebras with QLC relations and the Koszul dual curved coalgebra of such an algebra, and we prove our main theorem, i.e. that if the quadratic reduction of the algebra is Koszul, then the cobar construction on the Koszul dual of the algebra is a cofibrant replacement of the algebra.
In Section~\ref{sec:appl-sympl-n}, we apply the above theory to the symplectic Poisson $n$-algebras.
We explicitly describe the cofibrant replacement obtained by Koszul duality and we use it to compute their derived enveloping algebras, as well as factorization homology.

\paragraph[]{Acknowledgments}

The author is thankful to Julien Ducoulombier, Benoit Fresse, Thomas Willwacher, and Lukas Woike for helpful discussions and comments as well as to the anonymous referee for helpful remarks and suggestions.
The author was supported by the ERC project StG 678156–GRAPHCPX and ANR project ANR-20-CE40-0016 HighAGT, and contributes to the IdEx University of Paris ANR-18-IDEX-0001.

\section{Conventions, background, and recollections}
\label{sec:backgr-recoll}

We work with $\Z$-graded chain complexes over some base field $\Bbbk$ of characteristic zero, which we call ``dg-modules''.
Given a dg-module $V$, its suspension $\Sigma V$ is given by $(\Sigma V)_{n} = V_{n-1}$.

We work extensively with (co)operads and (co)algebras over (co)operads and we refer to e.g.~\cite{LodayVallette2012} or~\cite[Part~I(a)]{Fresse2017} for a detailed treatment.
Briefly, a (symmetric, one-colored) operad $\PP$ is a collection $\{ \PP(n) \}_{n \ge 0}$ of dg-modules, with each $\PP(n)$ equipped with an action of the symmetric group $\Sigma_{n}$, a unit $\eta \in \PP(1)$, and composition maps $\circ_{i} : \PP(k) \otimes \PP(l) \to \PP(k+l-1)$ for $1 \le i \le k$.
These structural maps satisfy the equivariance, unit, and associativity axioms that can be deduced immediately from the properties of multilinear composition and variable permutation in the prototypical operad $\{\Hom(V^{\otimes n}, V)\}$.
For such an operad $\PP$ and a dg-module $V$, we define $\PP(V) = \bigoplus_{n \ge 0} \PP(n) \otimes_{\Sigma_{n}} V^{\otimes n}$.
A $\PP$-algebra is a dg-module $A$ equipped with a structure map $\gamma_{A} : \PP(A) \to A$ satisfying an associativity axiom.
For a dg-module $V$, we denote by $\PP(V)$ the free $\PP$-algebra on $V$.

Cooperads (usually denoted $\CC$) and coalgebras (usually denoted $C$) are defined dually.
We will generally consider conilpotent cooperads, i.e., cooperads for which iterated cocompositions vanish on any element after enough iterations.
This extra condition is required to have a better behaved category, e.g., to have a much easier description of the cofree cooperad, as well as to avoid certain issues with infinite sums.
For non-conilpotent cooperads and coalgebras, more elaborate techniques are needed, such as the ones recently developed by Roca i Lucio~\cite{Lucio2022}.

Given an operad $\PP$, the suspended operad $\mathscr{S}\PP$ (sometimes denoted $\Lambda^{-1}\PP$) is defined such that a $\mathscr{S}\PP$-algebra structure on $\Sigma A$ is the same thing as a $\PP$-algebra structure on $A$.
The suspended cooperad $\mathscr{S}^{c}\CC$ is defined similarly.
On (co)free (co)algebras, we have $\mathscr{S}\PP(\Sigma V) = \Sigma \PP(V)$ and $\mathscr{S}^{c}\CC(\Sigma V) = \Sigma \CC(V)$.

\begin{example}
  Some examples of operads will appear several times:
  \begin{enumerate*}[label=(\roman*)]
    \item the operad $\Ass$ governing associative algebras;
    \item the operad $\Com$, governing commutative algebras;
    \item the operad $\Lie$, governing Lie algebra;
    \item the operad $\Pois_{n}$ governing Poisson $n$-algebras, i.e., algebras with a commutative product and a Lie bracket of degree $n-1$ which is a biderivation.
    As a symmetric sequence, $\Pois_{n}$ is isomorphic to $\Com \circ \mathscr{S}^{1-n}\Lie$~\cite[Section~13.3.3]{LodayVallette2012}.
    The operad structure is induced by those of $\Com$ and $\Lie$, as well as a distributive law stating that the bracket is a biderivation.
  \end{enumerate*}
\end{example}

If $E = \{ E(n) \}_{n \ge 0}$ is a symmetric sequence, then we write $\Free(E)$ for the free operad generated by $E$.
It can be described in terms of rooted trees with internal vertices decorated by elements of $E$.
Operadic composition is given by grafting of trees.
If $S \subset \PP$ is a subsequence of an operad $\PP$, then we write $\PP / (S)$ for the quotient by the operadic ideal generated by $S$.

\subsection{Koszul duality for{\texorpdfstring{\dots}{...}}}
\label{sec:kosz-dual-oper}

We now briefly recall several incarnations of Koszul duality in order to set up the notations and definitions.
Note that for presentational purposes, we will move on directly from the quadratic case to the quadratic-linear-constant case.
In Priddy's original work~\cite{Priddy1970}, the quadratic-linear case is already covered and applied to Steenrod algebras.

\subsubsection{{\texorpdfstring{\dots}{...}}quadratic associative algebras}
\label{sec:associative-algebras}

Let $A$ be a quadratic associative algebra, i.e., an algebra equipped with a presentation with quadratic relations.
There is an associated Koszul dual coalgebra $A^{\ashk}$ to $A$~\cite{Priddy1970} which can be used to define the Koszul complex $A \otimes_{\kappa} A^{\ashk}$.
By definition, $A$ is Koszul if this Koszul complex is acyclic.
The coalgebra $A^{\ashk}$ is a sub-coalgebra of the bar construction $BA$, and $A$ is Koszul if and only if the inclusion is a quasi-isomorphism.
There is a also canonical morphism from the cobar construction $\Omega A^{\ashk}$ to $A$, and $A$ is Koszul if and only if this canonical morphism is a quasi-isomorphism.
Since the cobar construction $\Omega A^{\ashk}$ is quasi-free as an associative algebra, this allows to produce an small quasi-free resolution of any Koszul algebra.

\begin{example}
  \label{exa:koszul-complex}
  Let $A = \R[x_{1}, \dots, x_{k}]$ be a free commutative algebra on $k$ variables of degree zero.
  The Koszul dual coalgebra $A^{\ashk} = \Lambda^{c}(dx_{1}, \dots, dx_{k})$ is the exterior coalgebra on $k$ variables of degree one.
  The Koszul complex $(A \otimes A^{\ashk}, d_{\kappa})$ has a differential similar to the de Rham differential.
  Since it is acyclic, $A$ is Koszul.
\end{example}

\subsubsection{{\texorpdfstring{\dots}{...}}quadratic operads}
\label{sec:quadratic-operads}

We refer to~\cite{LodayVallette2012} for a detailed treatment.
Let $\PP = \Free(E) / (R)$ be an operad generated $E = \{ E(n) \}_{n \ge 0}$ with relations $R \subset \Free(E)$.
This presentation is quadratic if the relations $R$ is a subsequence of the weight two component $\Free(E)^{(2)}$.
We call $\PP$ quadratic if it admits such a presentation.
One can define the Koszul dual cooperad $\PP^{\ashk}$, the cofree cooperad on cogenerators $\Sigma E$ subject to the corelations $\Sigma^{2} R$ (i.e., $\PP^{\ashk}$ is the smallest sub-cooperad of the cofree cooperad on $\Sigma E$ that contains $\Sigma^2 R$).
When $E$ is finite-dimensional in each arity, one can also define the Koszul dual operad $\PP^{!}$ as the shifted linear dual of $\PP^{\ashk}$.

\begin{example}
  The operad $\Ass = \Ass^!$ is auto-dual.
  The operads $\Com = \Lie^!$ are Koszul dual to each other.
  The operad $\Pois_{n}$ is auto-dual up to suspension, i.e., $\Pois_{n}^{!} = \mathscr{S}^{n-1}\Pois_{n}$.
\end{example}

The cooperad $\PP^{\ashk}$ is a sub-cooperad of the operadic bar construction $B\PP$.
The Rosetta Stone~\cite[Theorem~6.5.7]{LodayVallette2012} implies that morphisms $\PP^\ashk \to B\PP$ are in bijection with twisting morphisms, i.e., equivariant maps $\kappa : \PP^{\ashk} \to \Sigma\PP$ (i.e., of degree $-1$) that satisfy the Maurer--Cartan equation $\partial \kappa + \kappa \star \kappa$.
The operation $\star$ is the preLie convolution product on $\Hom(\PP^{\ashk}, \PP)$, defined on $f,g : \PP^{\ashk} \to \PP$ by:
\begin{equation}\label{eq:star}
  f \star g : \PP^{\ashk} \xrightarrow{\Delta_{(1)}} \PP^{\ashk} \circ_{(1)} \PP^{\ashk} \xrightarrow{f \circ_{(1)} g} \PP \circ_{(1)} \PP \xrightarrow{\gamma_{(1)}} \PP.
\end{equation}
The twisting morphism $\kappa$ induced by $\PP^\ashk \subset B\PP$ is given by:
\begin{equation}
  \label{eq:kappa}
  \kappa : \PP^{\ashk} \twoheadrightarrow \Sigma E \hookrightarrow \Sigma \PP,
\end{equation}
The same Rosetta Stone moreover implies that such twisting morphisms are in bijection with morphisms from the operadic cobar construction $\Omega \PP^{\ashk}$ to $\PP$.
The operad is said to be Koszul if this morphism is a quasi-isomorphism.

The twisting morphism $\kappa : \PP^{\ashk} \to \Sigma \PP$ induces an adjunction $\Omega_{\kappa} \dashv B_{\kappa}$ between the categories of $\PP^{\ashk}$-coalgebras and $\PP$-algebras.
If $\PP$ is Koszul, then $\Omega_{\kappa}B_{\kappa}$ is a functorial cofibrant replacement functor.

\begin{example}
  The operads $\Ass$, $\Com$, $\Lie$, and $\Pois_{n}$ are all Koszul.
  For the operad $\Ass$, $\Omega_\kappa \dashv B_\kappa$ gives the usual bar-cobar resolution.
  For $\Com$, the resolution obtained is (up to degree shifts) the algebra of Chevalley--Eilenberg cochains on the Harrison complex.
\end{example}

\subsubsection{{\texorpdfstring{\dots}{...}}monogenic algebras over operads}
\label{sec:kosz-dual-quadr}

Millès~\cite{Milles2012} extended Koszul duality to monogenic algebras over quadratic operads, a notion which generalizes quadratic associative algebras.
Given a quadratic operad $\PP = \Free(E) / (R)$, a monogenic $\PP$-algebra $A$ is an algebra equipped with a presentation $A = \PP(V) / (S)$, where $S \subset E(V)$ is a set of relations.
To such an algebra, one associates the Koszul dual $\mathscr{S}^c\PP^{\ashk}$-coalgebra $A^{\ashk} = \Sigma \PP^{\ashk}(V, \Sigma S)$, i.e., the suspension of the cofree $\PP^{\ashk}$-coalgebra on $V$ subject to the corelations $\Sigma S$.
There is a canonical algebra-twisting morphism $\varkappa: A^{\ashk} \to \Sigma A$ defined by:
\begin{equation}
  \varkappa : A^{\ashk} \twoheadrightarrow \Sigma V \hookrightarrow \Sigma A,
\end{equation}
Let $\kappa : \PP^{\ashk} \to \Sigma \PP$ be the operad-twisting morphism of Equation~\eqref{eq:kappa}.
The $\kappa$-star product $\star_{\kappa}(\varkappa)$ is given by the composition:
\begin{equation}
  \st{\kappa}(\varkappa) : \Sigma A^{\ashk} \xrightarrow{\Delta_{A^{\ashk}}} \Sigma \PP^{\ashk}(\Sigma^{-1} A^{\ashk}) \xrightarrow{\kappa \circ \varkappa} \Sigma^2 \PP(A) \xrightarrow{\gamma_{A}} \Sigma^2 A.
\end{equation}
The element $\varkappa$ satisfies the Maurer--Cartan equation $\st{\kappa}(\varkappa) = 0$ (the differential vanishes).
It thus defines a morphism $f_{\varkappa} : \Omega_{\kappa}A^{\ashk} \to A$.

The algebra $A$ is said to be Koszul if this morphism $f_{\varkappa}$ is a quasi-isomorphism.
Millès~\cite{Milles2012} proved that this is equivalent to a certain Koszul complex being acyclic, and also equivalent to the adjoint canonical morphism $A^{\ashk} \to BA$ being a quasi-isomorphism.
In this case, the algebra $\Omega_{\kappa}A^{\ashk}$ is an explicit, small resolution of $A$.
For example, if $\PP = \Ass$, this recovers the usual Koszul duality/resolution of associative algebras.

\subsubsection{{\texorpdfstring{\dots}{...}}operads with QLC relations}
\label{sec:curv-kosz-dual}

Curved Koszul duality is a generalization of Koszul duality for \emph{unital} associative algebras~\cite{Positselskiui1993,PolishchukPositselski2005}.
This was generalized by Hirsh--Millès~\cite{HirshMilles2012} for (pr)operads with quadratic-linear-constant (QLC) relations.
See also Priddy~\cite{Priddy1970} for quadratic-linear algebras, Lyubashenko~\cite{Lyubashenko2011} for the unital associative operad, and~\cite{Galvez-CarrilloTonksVallette2012} for operads with quadratic-linear relations.

Let $\II$ be the unit operad, i.e., $\II(1) = \Bbbk$ and $\II(n) = 0$ for $n \neq 1$.
A QLC presentation of an operad is a presentation $\PP = \Free(E) / (R)$, where $E$ is some module of generators and $R \subset \II \oplus E \oplus \Free(E)^{(2)}$ is some module of relations with constant (i.e., multiple of $\id_{\PP}$), linear, and quadratic terms.
The quadratic reduction $q\PP$ is the quadratic operad $\Free(E)/(qR)$, where $qR$ is the projection of $R$ onto $\Free(E)^{(2)}$.
Hirsh--Millès impose some conditions on this presentation: the space of generators is minimal, i.e., $R \cap \langle \II \oplus E \rangle = 0$, and the space of relations is maximal, i.e., $R = (R) \cap \langle \II \oplus E \oplus \Free(E)^{(2)} \rangle$.
Therefore $R$ is the graph of some map $\alpha = (\alpha_{0} + \alpha_{1}) : qR \to \II \oplus E$, i.e., $R = \{ X + \alpha(X) \mid X \in qR \}$.

\begin{example}
  The operad $u\Ass$ governing unital associative algebras has a QLC presentation.
  It is generated by the unit $\uni \in u\Ass(0)$ and the product $\mu \in u\Ass(2)$.
  The relations are $\mu \circ_{1} \mu = \mu \circ_{2} \mu$ (quadratic) and $\mu \circ_{1} \uni = \id = \mu \circ_{2} \uni$ (quadratic-constant).
  Its quadratic reduction $qu\Ass = \Ass \oplus \uni$ is the operad encoding associative algebras $A$ endowed with an element $z_0 \in A$ such that $z_0x = xz_0 = 0$ for all $x \in A$.
\end{example}

From this data, \cite{HirshMilles2012} define the Koszul dual \emph{curved} cooperad $\PP^{\ashk}$, which is a triplet $(q\PP^{\ashk}, d_{\PP^{\ashk}}, \theta_{\PP^{\ashk}})$ where:
\begin{itemize}[nosep]
  \item $q\PP^{\ashk}$ is the Koszul dual cooperad of the quadratic cooperad $q\PP$;
  \item the predifferential $d_{\PP^{\ashk}}$ is the unique degree $-1$ coderivation of $q\PP^{\ashk}$ whose corestriction (composition with the projection) onto $\Sigma E$ is given by $q\PP^{\ashk} \twoheadrightarrow \Sigma^{2} qR \xrightarrow{\alpha_{1}} \Sigma(\Sigma E)$;
  \item the curvature $\theta_{\PP^{\ashk}}$ is the map of degree $-2$ obtained by $q\PP^{\ashk} \twoheadrightarrow \Sigma^{2} qR \xrightarrow{\alpha_{0}} \Sigma^2 \mathsf{I}$.
\end{itemize}

The axioms satisfied by this data imply that the cobar construction $\Omega (q\PP^{\ashk}) = \bigl( \operatorname{Free}(\Sigma^{-1} q\PP^{\ashk}), d_{2} \bigr)$ is equipped with an extra differential $d_{0} + d_{1}$ defined from $\theta_{\PP^{\ashk}}$ and $d_{\PP^{\ashk}}$.
The canonical morphism $\Omega(q\PP^{\ashk}) \to q\PP$ extends to a canonical morphism $\Omega\PP^{\ashk} \coloneqq \bigl( \Omega(q\PP^{\ashk}), d_{0} + d_{1} \bigr) \to \PP$.
If the quadratic operad $q\PP$ is Koszul, then $\Omega\PP^{\ashk} \to \PP$ is a quasi-isomorphism~\cite[Theorem~4.3.1]{HirshMilles2012}.
The operad $\PP$ is therefore said to be Koszul if the quadratic operad $q\PP$ is Koszul in the usual sense.

\begin{remark}
  Le Grignou~\cite{Grignou2017} defined a model category structure on the category of curved cooperads, which is Quillen equivalent to the model category of operads using the bar/cobar adjunction.
\end{remark}

\begin{remark}
  \label{rmk koszul depends presentation}
  It is important to note that, unlike in the quadratic case, the property of being Koszul is not intrinsic to the operad.
  Indeed, every operad $\PP$ admits a Koszul QLC presentation given by $E = \{\mathbf{e}_x \mid x \in \PP\}$ (every element of the operad is a generator) and with quadratic-linear relations given by the ``multiplication table'' $\mathbf{e}_x \circ_i \mathbf{e}_y = \mathbf{e}_{x \circ_i y}$.
  Then $\PP^{\ashk} = B\PP$ is the bar construction of $\PP$, and the cofibrant resolution obtained is the bar-cobar construction.
  However, there exist operads with non-Koszul QLC presentations, e.g., any non-Koszul quadratic operad.
  Being Koszul thus becomes a property of the presentation, rather than of the operad.
\end{remark}

\subsection{Unital versions of operads}
\label{sec:unital-versions}

In what follows, we will only work with algebras over unital versions of binary quadratic operads.
Let $\PP = \operatorname{Free}(E) / (R)$ be a an operad presented by binary generators $E = E(2)$ and quadratic relations $R \subset \bigl( E(2)^{\otimes 2} \bigr)_{\Sigma_{2}}$ and let us assume that the differential of $\PP$ is zero.

\begin{remark}\label{rmk:lem-split}
  While a large part of this paper could be carried out without the assumption that $\PP$ is binary, we need this assumption to be able to prove Lemma~\ref{lem:split} and Proposition~\ref{prop:def-koszul-dual}.
  Proposition~\ref{prop:def-koszul-dual} could be proved for non-binary operads by modifying the weight grading, but not (to the author's knowledge) Lemma~\ref{lem:split}.
\end{remark}

\begin{definition}[{Adapted from \cite[Definition~6.5.4]{HirshMilles2012}}]
  \label{def:unital-version}
  A unital version of $\PP$ is an operad $u\PP$ equipped with a presentation of the form $u\PP = \operatorname{Free}(E \oplus \uni) / (R + R')$, where $\uni$ is a generator of arity zero and degree zero, and such that
  \begin{enumerate}[label=(\roman*)]
    \item the inclusion $E \subset E \oplus \uni$ induces an injective morphism of operads $\PP \to u\PP$;
    \item we have an isomorphism $\uni \oplus \PP \cong qu\PP$ induced by the inclusions $\PP \subset u\PP$ and $\uni \subset u\PP$;
    \item the QLC relations in $R'$ have no linear terms, only quadratic-constant.
  \end{enumerate}
\end{definition}

\begin{example}\label{exa:QLC-operad}
  Examples include:
  \begin{enumerate*}[label=(\roman*)]
    \item $u\Ass$, encoding unital associative algebras;
    \item $u\Com$, encoding unital commutative algebras;
    \item $c\Lie$, encoding Lie algebras equipped with a central element;
    \item $u\Pois_{n}$, encoding Poisson $n$-algebras equipped with an element which is a unit for the product and a central element for the shifted Lie bracket.
  \end{enumerate*}
\end{example}

\subsection{Factorization homology}
\label{sec:fact-homol}

Let us now briefly introduce factorization homology (as in the previous sections, we do not claim any originality).
Factorization homology~\cite{AyalaFrancis2015}, also known as topological chiral homology~\cite{BeilinsonDrinfeld2004}, is an invariant of manifolds with coefficients in a homotopy commutative algebra, just like standard homology is an invariant of topological spaces with coefficients in a commutative ring.
One possible definition of factorization homology is the following~\cite{Francis2013}, which we only give for parallelized manifolds for simplicity.

Fix some integer $n \ge 0$.
Let $\Disk^{\fr}_{n}$ be the endomorphism operad of $\R^{n}$ in the category of parallelized manifolds and embeddings preserving parallelization.
In each arity, we have $\Disk^{\fr}_{n}(k) \coloneqq \operatorname{Emb}^{\fr} \bigl( (\R^{n})^{\sqcup k}, \R^{n} \bigr)$, and operadic composition is given by composition of embeddings.
This operad is weakly equivalent to the usual little $n$-disks operad, i.e., it is an $E_{n}$-operad.
In particular, its homology $H_{*}(\Disk^{\fr}_{n})$ is the unital associative operad $u\Ass$ for $n = 1$, and the unital $n$-Poisson operad $u\Pois_{n}$ for $n \ge 2$ (see Example~\ref{exa:QLC-operad}).

Moreover, given a parallelized $n$-manifold $M$, there is a right $\Disk^{\fr}_{n}$-module given by $\Disk^\fr_{M}(k) \coloneqq \operatorname{Emb}^{\fr} \bigl( (\R^{n})^{\sqcup k}, M \bigr)$.
For a topological $\Disk^{\fr}_{n}$-algebra $A$ (i.e., an $E_{n}$-algebra), the factorization homology of $M$ with coefficients in $A$, denoted by $\int_{M} A$, is given by the derived composition product:
\begin{equation}\label{eq:fact-hom}
  \int_{M} A \coloneqq \Disk^{\fr}_{M} \circ^{\mathbb{L}}_{\Disk^{\fr}_{n}} A = \operatorname{hocoeq} \bigl( \Disk^{\fr}_{M} \circ \Disk^{\fr}_{n} \circ A \rightrightarrows \Disk^{\fr}_{M} \circ A \bigr).
\end{equation}

This definition also makes sense in the category of chain complexes, replacing $\Disk^{\fr}_{n}$ and $\Disk^{\fr}_{M}$ by their respective singular chains chain complexes.
The operad $\Disk_{n}$ is formal over the rationals~\cite{Kontsevich1999,Tamarkin2003,LambrechtsVolic2014,Petersen2014,FresseWillwacher2015}.
Hence, up to homotopy, we may replace $C_{*}(\Disk^{\fr}_{n}; \Q)$ by $H_{*}(\Disk^{\fr}_{n}; \Q) = u\Pois_{n}$ for $n \ge 2$.

In~\cite{Idrissi2016}, given a simply connected closed smooth manifold $M$ with $\dim M \ge 4$, we provided an explicit model for $C_{*}(\Disk^{\fr}_{M}; \R)$.
Our model is a right module over the operad $H_{*}(\Disk^{\fr}_{n}) = u\Pois_{n}$, and the action is compatible with the action of $\Disk^{\fr}_{n}$ on $\Disk^{\fr}_{M}$ in an appropriate sense.
This allows us to compute factorization homology of such manifolds by replacing $C_{*}(\Disk^{\fr}_{M})$ with our model.

This explicit model, denoted $\GG_{P}^{\vee}$, depends on a Poincaré duality model $P$ of $M$, i.e., an (upper-graded) commutative differential graded algebra equipped with a linear form $\varepsilon : P^{n} \to \Q$ satisfying $\varepsilon \circ d = 0$ and inducing a non-degenerate pairing $P^{k} \otimes P^{n-k} \to \Q$, $x \otimes y \mapsto \varepsilon(xy)$ for all $k \in \Z$.
It is moreover a rational model for $M$ in the sense of Sullivan's rational homotopy theory.
These Poincaré duality models exist for all simply connected closed manifolds~\cite{LambrechtsStanley2008}.

We will not give the original definition of $\GG_{P}^\vee$.
Instead, we give the alternative description of~\cite[Section~5]{Idrissi2016}.
Let $\Lie_{n} = \mathscr{S}^{1-n}\Lie$ be the operad governing shifted Lie algebras.
For convenience, let us also define $\LL_{n}(k) \coloneqq \Sigma^{n-1}\Lie_{n}(k)$, which satisfies $\LL_{n}(V) \cong \Lie(\Sigma^{n-1} V)$ for all spaces $V$.
This symmetric sequence is a Lie algebra in the category of right $\Lie_{n}$-modules: the right $\Lie_n$-module of $\Lie_n$ is unaffected by the shift, and the Lie algebra structure is the shift of the canonical left action of $\Lie_n$ on itself.

Given a Lie algebra $\mathfrak{g}$, let its Chevalley--Eilenberg chain complex (with trivial coefficients) be $C_{*}^{CE}(\mathfrak{g}) \coloneqq (\bar{S}^{c}(\Sigma \mathfrak{g}), d_{CE})$, with differential defined by $d_{CE}(x_{1} \dots x_{k}) = \sum_{i < j} \pm x_{1} \dots [x_{i}, x_{j}] \dots \widehat{x_{j}} \dots x_{k}$.
As a right $\Lie_{n}$-module, our model is given by~\cite[Lemma~5.2]{Idrissi2016}:
\begin{equation}
  \GG_{P}^{\vee} \cong_{\Lie_{n}-\mathrm{RMod}} C_{*}^{CE}(P^{-*} \otimes \LL_{n}),
\end{equation}
where $P^{-*}$ is $P$ with grading reversed.
(The right $u\Com$-module structure, which is not explicitly described in~\cite{Idrissi2016}, will be described in Section~\ref{sec:right-ucom-module}.)

Using the theorems of~\cite[Chapter 15]{Fresse2009}, we find that given a $u\Pois_{n}$-algebra $A$, the factorization homology of $M$ with coefficients in $A$ over $\R$ is given, up to quasi-isomorphism and under the hypotheses stated above, by:
\begin{equation}
  \int_M A \simeq \GG_{P}^{\vee} \circ_{u\Pois_{n}}^{\mathbb{L}} A.
\end{equation}

As an example, if $A = S(\Sigma^{1-n} \mathfrak{g})$ is the universal enveloping $n$-algebra of $\mathfrak{g}$, then we recovered in~\cite{Idrissi2016} a theorem of Knudsen~\cite{Knudsen2016} which states that $\int_{M} S(\Sigma^{1-n} \mathfrak{g}) \simeq C_{*}^{CE}(P^{n-*} \otimes \mathfrak{g})$.

\section{Curved bar-cobar adjunction}
\label{sec:curved-coalgebras}

From now on, we fix a binary quadratic operad $\PP = \Free(E) / (R)$ and a unital version $u\PP = \Free(\uni \oplus E) / (R + R')$ as in Section~\ref{sec:unital-versions}, with the same notations as in that section.
Note that $\PP$ is automatically augmented.
We also fix a coaugmented conilpotent binary cooperad $\CC$, with zero differential.
In this section, we define an adjunction between curved $\mathscr{S}^c \CC$-coalgebras and semi-augmented $\PP$-algebras.

Our notion of curved coalgebras depends on a twisting morphism.
We fix throughout the section a twisting morphism $\varphi : \CC \to \Sigma\PP$, i.e., a map of degree $-1$ satisfying the Maurer--Cartan equation $\varphi \star \varphi = 0$, where $\star$ is the convolution product (see Equation~\eqref{eq:star}).
We moreover assume that $\varphi$ vanishes on any element of arity $\geq 3$, and that the underlying map
\begin{equation}
  \varphi : \CC(2) \to \Sigma \PP(2)
\end{equation}
is injective.
This is the case for the Koszul twisting morphism $\kappa : \PP^{\ashk} \to \Sigma \PP$, which is the main example of interest.

\subsection{Curved coalgebras and semi-augmented algebras}
\label{sec:coalg-twist-morph}

Let $C$ be a graded vector space.
For a linear map $\theta : \Sigma^{-1} C \to \Sigma \K \uni = \Sigma u\PP(0)$ (i.e., a linear form of degree $-2$), and for an element $x \in C$, we will denote by $\Theta(x) \in \K$ the scalar such that $\theta(\Sigma^{-1}x) = \Theta(x) \cdot \Sigma\uni$.

\begin{definition}
  \label{def:phi-theta}
  Let $C$ be an $\mathscr{S}^c \CC$-coalgebra (i.e., $\Sigma^{-1} C$ is a $\CC$-coalgebra).
  For any linear map $\theta : \Sigma^{-1} C \to \Sigma \Bbbk \uni$, we define a morphism $\varphi \circ' \theta$ by:
  \begin{multline*}
      \Sigma \CC(2) \otimes_{\Sigma_2} (\Sigma^{-1} C)^{\otimes 2}
       \to \Sigma^2 u\PP(2) \otimes_{\Sigma_2} \bigl( (\Bbbk \uni \otimes C) \oplus (C \otimes \Bbbk \uni) \bigr)
      \\
      \Sigma x(\Sigma^{-1} c_1, \Sigma^{-1} c_2)
       \mapsto
      \Sigma \varphi(x) \bigl( \Theta(c_1) \uni, \, c_2 \bigr) -
      \Sigma \varphi(x) \bigl(c_1, \, \Theta(c_2) \uni \bigr)
      .
  \end{multline*}
  where $\varphi(x) \in \Sigma \PP(2) = \Sigma u\PP(2)$.
\end{definition}

\begin{remark}
  Note that we use the linear isomorphisms $\Sigma \uni \otimes \Sigma^{-1} C \cong \uni \otimes C$ and $\Sigma^{-1} C \otimes \Sigma \uni \cong C \otimes \uni$, which are not $\Sigma_2$-equivariant.
  However, a short computation shows that the previous formula is well-defined on $\Sigma_2$-coinvariants.
\end{remark}

\begin{example}
  Let $\PP = \Ass$, $\CC = \Ass^{\ashk}$, and $\varphi = \kappa : \Ass^{\ashk}(2) \to \Sigma \Ass(2)$ be the Koszul twisting morphism of the associative operad.
  We denote by $\mu \in \Ass(2)$ the generator.
  Let $C$ be a coalgebra over $\Ass^{\vee} = \mathscr{S}^c \Ass^{\ashk}$, i.e., a coassociative coalgebra~\cite[Proposition~9.1.4]{LodayVallette2012}.
  Then we have, for $c_1, c_2 \in C$:
  \begin{equation*}
    \Sigma^2 \mu(\Sigma^{-1} c_1, \Sigma^{-1} c_2) \xmapsto{\varphi \circ' \theta} \Sigma^2\mu(\Theta(c_1) \uni, \, c_2) - \Sigma^2\mu (c_1, \, \Theta(c_2) \uni).
  \end{equation*}
\end{example}

\begin{definition}\label{def:star-phi}
  Let $C$ be an $\mathscr{S}^c\CC$-coalgebra.
  The $\varphi$-star product of a linear map $\theta : \Sigma^{-1} C \to \Sigma \Bbbk \uni$ is the composition:
  \begin{multline}
    \st{\varphi}(\theta) : C \xrightarrow{\Delta_{C}} \Sigma \CC(\Sigma^{-1} C)
    \twoheadrightarrow
    \Sigma \CC(2) \otimes^{\Sigma_2} (\Sigma^{-1} C)^{\otimes 2}
    \xrightarrow{\varphi \circ' \theta}
    \\
    \xrightarrow{\varphi \circ' \theta}
    \Sigma^2 u\PP(2) \otimes_{\Sigma_2} \bigl( (\Bbbk \uni \otimes C) \oplus (C \otimes \Bbbk \uni) \bigr)
    \xrightarrow{\gamma_{u\PP}}
    \Sigma^2 u\PP(C),
  \end{multline}
  where, for the last map, we see $C \cong u\PP(1) \otimes C$ as a subspace of $u\PP(C)$, and we view $\uni$ as an element of $u\PP(0) \subset u\PP(C)$.
\end{definition}

\begin{lemma}\label{lem:split}
  Given a twisting morphism $\varphi : \CC \to \Sigma \PP$, an $\mathscr{S}^c \CC$-coalgebra $C$, and a map $\theta : \Sigma^{-1} C \to \Sigma \Bbbk \uni$, the image of $\st{\varphi}(\theta)$ is included in $\Sigma^2 C \subset \Sigma^2 u\PP(C)$.
\end{lemma}
\begin{proof}
  This follows from conditions (ii) and (iii) of Definition~\ref{def:unital-version}, which imply that the inhomogeneous relations of $u\PP$ have no linear terms.
\end{proof}

\begin{lemma}\label{lem:star-coder}
  Let $C$ be a $\mathscr{S}^c \CC$-coalgebra, $\varphi : \CC \to \Sigma \PP$ be a twisting morphism, and $\theta :\Sigma^{-1} C \to \Sigma \Bbbk \uni$ a linear map.
  The map $\st{\varphi}(\theta) : C \to \Sigma^2 C$ is a coderivation.
\end{lemma}

\begin{proof}
  Let $\Delta^{(r)}_C : C \to \Sigma \CC(r) \otimes_{\Sigma_r} (\Sigma^{-1} C)^{\otimes r}$ be the projection of the structure map of $C$ onto the arity $r$ part.
  Since $\CC$ is binary, we must check that we have:
  \begin{equation}
    \Delta^{(2)}_C \circ (\st{\varphi}(\theta)) = (\st{\varphi}(\theta) \otimes \id_C + \id_C \otimes \st{\varphi}(\theta)) \circ \Delta^{(2)}_C.
  \end{equation}
  Recall that the $\CC$-coalgebra structure on $\Sigma^{-1}C$ is coassociative, therefore we have commutative squares, for $i \in \{1,2\}$ (where suspensions are implicit):
  \begin{equation}
    \begin{tikzcd}[column sep = huge]
      C \ar[r, "\Delta_C^{(2)}"] \ar[d, "\Delta_C^{(3)}"] & \CC(2) \otimes C^{\otimes 2} \ar[d, "\id_{\CC(2)} \otimes \id_C^{\otimes i-1} \otimes \Delta_C^{(2)} \otimes \id_C^{\otimes 2-i}"] \\
      \CC(3) \otimes C^{\otimes 3} \ar[r, "\circ_i^* \otimes \id_C^{\otimes 3}"] & \CC(2) \otimes \CC(2) \otimes C^{\otimes 3}
    \end{tikzcd}
    .
  \end{equation}

  Due to Definition~\eqref{def:phi-theta}, the definition of $\st{\varphi}(\theta)$, and the previous commutative square, in order to prove that $\st{\varphi}(\theta)$ is a coderivation, we need to compare the operations represented graphically by the following trees:
  \begin{equation}
    \Delta^{(2)}_C \circ (\st{\varphi}(\theta)) =
    \begin{tikzpicture}[baseline=4mm]
      \node {$\varphi$} [mytree]
      child { child child }
      child {node{$\theta$}};
    \end{tikzpicture}
    -
    \begin{tikzpicture}[baseline=4mm]
      \node {$\varphi$} [mytree]
      child {node{$\theta$}}
      child { child child }
      ;
    \end{tikzpicture},
  \end{equation}
  \begin{equation}
    (\id_{\CC(2)}(\st{\varphi}(\theta), {\id_C}) + \id_{\CC(2)}({\id_C}, \st{\varphi}(\theta))) \circ \Delta^{(2)}_C
    =
    \begin{tikzpicture}[baseline=4mm]
      \node[inner sep=0pt] {} [mytree]
      child {
          node {$\varphi$}
          child {node{$\theta$}}
          child
        }
      child;
    \end{tikzpicture}
    -
    \begin{tikzpicture}[baseline=4mm]
      \node[inner sep=0pt] {} [mytree]
      child {
          node {$\varphi$}
          child
          child {node{$\theta$}}
        }
      child;
    \end{tikzpicture}
    +
    \begin{tikzpicture}[baseline=4mm]
      \node[inner sep=0pt] {} [mytree]
      child
      child {
          node {$\varphi$}
          child {node{$\theta$}}
          child
        };
    \end{tikzpicture}
    -
    \begin{tikzpicture}[baseline=4mm]
      \node[inner sep=0pt] {} [mytree]
      child
      child {
          node {$\varphi$}
          child
          child {node{$\theta$}}
        };
    \end{tikzpicture}
    ,
  \end{equation}
  where e.g., the first tree represents the map (up to suspensions):
  \begin{multline}
    C \xrightarrow{\Delta^{(3)}_C} \CC(3) \otimes C^{\otimes 3} \xrightarrow{\circ_1^* \otimes \id_C^{\otimes 3}} \CC(2) \otimes \CC(2) \otimes C^{\otimes 3} \\
    \xrightarrow{\varphi \otimes \id_{\CC(2)} \otimes \id_C^{\otimes 2} \otimes \theta} \PP(2) \otimes \CC(2) \otimes C^{\otimes 2} \otimes \K \uni \xrightarrow{\gamma_{u\PP}} u\PP(1) \otimes \CC(2) \otimes C^{\otimes 2} \cong \CC(2) \otimes C^{\otimes 2}.
  \end{multline}

  A similar graphical interpretation of the Maurer--Cartan equation $\varphi \star \varphi = 0$ leads to the vanishing of:
  \begin{equation}
    \begin{tikzpicture}[baseline=4mm]
      \node {$\varphi$} [mytree]
      child {
          node {$\varphi$}
          child child
        }
      child
      ;
    \end{tikzpicture}
    +
    \begin{tikzpicture}[baseline=4mm]
      \node {$\varphi$} [mytree]
      child
      child {
          node {$\varphi$}
          child child
        }
      ;
    \end{tikzpicture}
    :\CC(3) \to (\CC(2) \otimes \CC(2))^{\oplus 2} \to (\PP(2) \otimes \PP(2))^{\oplus 2} \to \PP(3).
  \end{equation}
  Tensoring this relation with $C^{\otimes 3}$ and plugging $\theta$ in the three possible places leads to the coderivation relation using the injectivity of $\varphi : \CC(2) \to \Sigma \PP(2)$ and the associativity and unitality of the structure map of $u\PP$.
\end{proof}

\begin{example}
  Let $u\PP = u\Ass$ be the operad governing unital associative operads and $\kappa : \Ass^{\ashk} \to \Sigma \Ass$ the Koszul twisting morphism.
  Let $C$ be a coassociative coalgebra and let $\theta : \Sigma^{-1} C \to \Sigma \Bbbk$ be a linear map.
  The $\kappa$-star product of $\theta$ is given by:
  \begin{equation}
    \st{\kappa}(\theta) = (\theta \otimes {\id} - {\id} \otimes \theta) \Delta_{C} : C \to C.
  \end{equation}
\end{example}

We introduce the following definition inspired by the definition of a curved coproperad in~\cite[Section~3.2.1]{HirshMilles2012}.
(Informally, we can think of the definition of~\cite{HirshMilles2012} as the case where the twisting morphism $\varphi$ is the Koszul morphism from the colored operad of operads to its Koszul dual.)
\begin{definition}
  \label{def:curved-coalg}
  A $\varphi$-curved $\mathscr{S}^c \CC$-coalgebra is a triple $(C, d_{C}, \theta_{C})$ where:
  \begin{itemize}
    \item $C$ is a $\mathscr{S}^c \CC$-coalgebra (with no differential);
    \item $d_{C} : C \to \Sigma C$ is a coderivation (the ``predifferential'');
    \item $\theta_{C} : \Sigma^{-1} C \to \Sigma \Bbbk \uni$ is a linear map (the ``curvature'');
  \end{itemize}
  satisfying:
  \begin{align}
    d_{C}^{2} & = \st{\varphi}(\theta_{C}), & \theta_{C} d_{C} & = 0.
  \end{align}
\end{definition}

\begin{remark}
  This notion is a generalization of the notion of coalgebra over a curved cooperad from~\cite[Section~5.2.1]{HirshMilles2012}.
  A coalgebra over a curved cooperad $(\CC, d_{\CC}, \theta_{\CC})$ is a pair $(C, d_{C})$ where $C$ is a $\CC$-coalgebra, $d_{C}$ is a coderivation of $C$, and $d_{C}^{2} = (\theta_{\CC} \circ \id_{C})\Delta_{C}$.
  In our setting, the curvature is part of the data of the coalgebra, rather than the cooperad itself, and we have an extra condition $\theta_{C} d_{C} = 0$.
  Moreover, our notion of curved coalgebra depends on the data of a twisting morphism $\varphi : \CC \to \PP$, whereas in~\cite{HirshMilles2012} this is extra data required to define a bar/cobar adjunction.
  Le Grignou~\cite{LeGrignou2016} endowed the category of coalgebras over a curved cooperad with a model category structure, such that the bar/cobar adjunction defines a Quillen equivalence with the category of algebras over non-augmented operads.
\end{remark}

\begin{example}\label{exa:curved-coass}
  Consider $u\PP = u\Ass$, the operad encoding unital associative algebras.
  Then its Koszul dual $\CC = \Ass^{\ashk} = (\mathscr{S}^{c})^{-1}\Ass^{\vee}$ encodes shifted coassociative coalgebras.
  Let $\varphi = \kappa$ be the twisting morphism of Koszul duality.
  A $\kappa$-curved $\mathscr{S}^c \Ass^{\ashk}$-coalgebra is a coassociative coalgebra $C$, equipped with a predifferential $d_{C}$ and a curvature $\theta_{C} : \Sigma^{-1} C \to \Sigma \Bbbk$, satisfying $\theta_{C}d_{C} = 0$ and (compare with Lyubashenko~\cite{Lyubashenko2017}):
  \begin{equation}
    d_{C}^{2} = \st{\kappa}(\theta) : C \xrightarrow{\Delta} C \otimes C \xrightarrow{\theta \otimes \id - \id \otimes \theta} C.
  \end{equation}
\end{example}

\begin{example}
  \label{exa:curved-Lie}
  For Lie coalgebras and the Koszul twisting morphism $\varphi = \kappa : \Com^{\ashk} = (\mathscr{S}^c)^{-1}\Lie^\vee \to \Sigma\Com$, we recover (the dual of) the notion of curved Lie algebras~\cite{ChuangLazarevMannan2016,Maunder2017}, i.e, Lie algebras $\mathfrak{g}$ equipped with a derivation $d$ of degree $-1$ and an element $\omega = \theta^\vee$ of degree $-2$ such that $d^{2} = [\omega, -]$.
\end{example}

We also define the notion of a semi-augmented algebra over $u\PP$ (the terminology is adapted from~\cite{HirshMilles2012}).
This is necessary because, in general, the bar construction of an algebra is not a curved coalgebra.

\begin{definition}
  \label{def:semi-aug}
  A semi-augmented $u\PP$-algebra is a dg-$u\PP$-algebra $A$ equipped with a linear map $\varepsilon_{A} : A \to \Bbbk$ (not necessarily compatible with the dg-algebra structure), such that $\varepsilon_{A}(\uni) = 1$.
  Given such a semi-augmented $u\PP$-algebra, we let $\bar{A}$ be the kernel of $\varepsilon_{A}$.
\end{definition}

For a semi-augmented $u\PP$-algebra $(A, \varepsilon_{A})$, the exact sequence $0 \to \bar{A} \to A \xrightarrow{\varepsilon} \Bbbk\uni \to 0$ defines an isomorphism of graded modules $A \cong \bar{A} \oplus \Bbbk \uni$.
This isomorphism is not compatible with the algebra structure in general.
This allows to define a ``composition'' $\bar\gamma_{A} : u\PP(\bar{A}) \to \bar{A}$ (which is generally not associative) and a differential $\bar{d}_{A} : \bar{A} \to \Sigma \bar{A}$, by using the inclusion and the projection $\bar{A} \to \bar{A} \oplus \Bbbk \uni \cong A \to \bar{A}$.
Note that $\bar{d}_A^2 = 0$, since if $dx = \bar{d}x + \alpha 1$, then $d(\bar{d}x) = d(\bar{d}x + \alpha 1) = d^2x = 0 = 0 + 0 \cdot 1_A$.

\subsection{Cobar construction}
\label{sec:cobar-construction}

Let $\varphi : \CC \to \PP$ be a twisting morphism, i.e., an element satisfying the Maurer--Cartan equation $\varphi \star \varphi = 0$.
Let $C = (C, d_{C}, \theta_{C})$ be a $\varphi$-curved $\mathscr{S}^c\CC$-coalgebra as in Definition~\ref{def:curved-coalg}.
We adapt the definition of~\cite[Section~5.2.5]{HirshMilles2012}.

\begin{definition}\label{def:cobar}
  The cobar construction of $C$ with respect to $\varphi$ is:
  \begin{equation}
    \Omega_{\varphi}(C) \coloneqq (u\PP(\Sigma^{-1} C), d_{\Omega} = -d_{0} - d_{1} - d_{2}),
  \end{equation}
  where each $d_{i}$ is a derivation of degree $-1$ defined on generators by:
  \begin{align}
    d_{0}|_{\Sigma^{-1}C}
     & : \Sigma^{-1}C \xrightarrow{\theta_{C}} \Sigma \Bbbk \uni
    \hookrightarrow \Sigma u\PP(\Sigma^{-1}C)
    \\
    d_{1}|_{\Sigma^{-1}C}
     & : \Sigma^{-1}C \xrightarrow{d_{C}} \Sigma\Sigma^{-1}C \hookrightarrow
    \Sigma u\PP(\Sigma^{-1}C)
    \\
    d_{2}|_{\Sigma^{-1}C}
     & : \Sigma^{-1}C \xrightarrow{\Delta} \CC(\Sigma^{-1}C) \xrightarrow{\varphi(\id)} \Sigma u\PP(\Sigma^{-1}C)
  \end{align}
  It is equipped with the semi-augmentation $\varepsilon_{\Omega} : \Omega_{\varphi}(C) \to \Bbbk$ induced by the projection $u\PP \to \uni$.
\end{definition}

\begin{example}
  Let $u\PP = u\Ass$ be the operad encoding unital associative algebras and $\kappa : \Ass^{\ashk} \to \Sigma\Ass$ the Koszul twisting morphism.
  We saw that $\kappa$-curved $\mathscr{S}^c\Ass^{\ashk}$-coalgebras are curved coassociative coalgebras (Example~\ref{exa:curved-coass}).
  The above cobar construction is the classical curved cobar construction.
\end{example}

\begin{proposition}\label{prop:omega-d-square}
  Given a $\varphi$-curved $\mathscr{S}^c\CC$-coalgebra $(C, d_{C}, \theta_{C})$, the cobar construction $\Omega_{\varphi}(C)$ is a semi-augmented $u\PP$-algebra.
\end{proposition}

\begin{proof}
  All we need to check is that the derivation $d_{\Omega}$ squares to zero.
  There is a weight decomposition (denoted $\omega$) of $u\PP(\Sigma^{-1}C)$ obtained by assigning $\Sigma^{-1}C$ the weight one.
  For example, $d_{0}$ is of weight $-2$, $d_{1}$ is of weight $-1$, and $d_{2}$ is of weight zero.
  We may then decompose $d^{2}_{\Omega}$ in terms of this weight:
  \begin{equation*}
    d_{\Omega}^{2} = \underbrace{d_{0}^{2}}_{-4} + \underbrace{d_{0}d_{1}
    + d_{1}d_{0}}_{-3} +
    \underbrace{d_{1}^{2} + d_{0}d_{2} +
    d_{2}d_{0}}_{-2} +\underbrace{d_{1}d_{2} +
    d_{2}d_{1}}_{-1} + \underbrace{d_{2}^{2}}_{0}.
  \end{equation*}
  Each summand is a derivation (because $d_{\Omega}^{2} = \frac{1}{2} [d_{\Omega}, d_{\Omega}]$ is a derivation thus so are its weight components).
  It thus suffices to check that these summands vanish on generators.
  \begin{itemize}
    \item $d_{0}^{2} = 0$: the image of $d_{0}$ is included in $\Bbbk \uni$, and every derivation vanishes on $\uni$;
    \item $d_{1} d_{0} = d_{0} d_{1} = 0$ respectively because $d_{1}(\uni) = 0$ and $\theta_{C} d_{C} = 0$;
    \item $d_{1}^{2} + d_{0} d_{2} = d_{2} d_{0} = 0$: we have that $d_{2} d_{0} = 0$, again because $d_{2}(\uni) = 0$, and $d_{1}^{2} + d_{0} d_{2} = 0$ follows from $d_{C}^{2} = \st{\varphi}(\theta_{C})$;
    \item $d_{1} d_{2} + d_{2} d_{1} = 0$ comes from the fact that $d_{C}$ is a derivation, that relation being post-composed by $\varphi$ to obtain $d_1d_2+d_2d_1=0$;
    \item $d_{2}^2 = 0$ follows from the Maurer--Cartan equation $\varphi \star \varphi = 0$ and the commutativity of the following diagram (where we do not write all suspensions):
          \[\begin{tikzcd}[column sep = huge]
              C \rar{\Delta_{C}} \dar{\Delta_{C}}
              & \CC(C) \rar{\varphi \circ \id} \dar{\id \circ' \Delta_{C}}
              & u\PP(C) \dar{\id \circ' \Delta_{C}} \\
              \CC(C) \rar{\Delta_{(1)} \circ \id}
              & \CC(C; \CC(C)) \rar{\varphi(\id; \id)}
              & u\PP(C; \CC(C)) \dar{\id(\id; \varphi(\id))} \\
              {} & {} & (u\PP \circ_{(1)} u\PP)(C) \dar{\gamma_{(1)}} \\
              {} & {} & u\PP(C).
            \end{tikzcd}
            \qedhere
          \]
  \end{itemize}
\end{proof}

\subsection{Bar construction}
\label{sec:bar-construction}

We now define the adjoint of the cobar construction: the bar construction (cf.~\cite[Section~3.3.2]{HirshMilles2012} for the (pr)operadic case).
Let $A$ be a semi-augmented dg-$u\PP$-algebra (see Definition~\ref{def:semi-aug}), $\varepsilon_{A} : A \to \Bbbk$ be the semi-augmentation, and $\bar{A} = \ker \varepsilon_{A}$.
Recall that we have induced linear maps $\bar{\gamma}_{A} : u\PP(\bar{A}) \to \bar{A}$ and $\bar{d}_{A} : \bar{A} \to \bar{A}$.

We now define the $\varphi$-curved $\mathscr{S}^c\CC$-coalgebra (cf.~\cite[Section~5.2.3]{HirshMilles2012})
\begin{equation}
  B_{\varphi}A \coloneqq (\Sigma \CC(\bar{A}), d_{B} = d_1 + d_2, \theta_{B}).
\end{equation}
The underlying $\mathscr{S}^c\CC$-coalgebra of $B_{\varphi}A$ is merely the shifted cofree coalgebra $\Sigma \CC(\bar{A})$.
The predifferential $d_{B}$ is the sum of the unique coderivations $d_1,d_2$ whose corestrictions are respectively:
\begin{align}
  d_{2}|^{\Sigma\bar{A}} : \Sigma\CC(\bar{A}) \xrightarrow{\Sigma\varphi(\id)} \Sigma^2 \PP(\bar{A}) \xrightarrow{\bar\gamma_{A}} \Sigma^2 \bar{A};
  \\
  d_{1}|^{\Sigma\bar{A}} : \Sigma\CC(\bar{A}) \twoheadrightarrow \Sigma\bar{A} \xrightarrow{\bar{d}_{A}} \Sigma^2 \bar{A}.
\end{align}

Let $\varepsilon_{\CC} : \CC \to I$ be the counit of the cooperad $\CC$.
The curvature $\theta_{B} : \Sigma\CC(A) \to \Bbbk$ is the map of degree $-2$ given by:
\begin{equation}\label{eq:thetaB}
  \Sigma\CC(\bar{A}) \xrightarrow{(\varepsilon_{\CC} \oplus \varphi)(\id_{A})} \Sigma\bar{A} \oplus \Sigma^2 u\PP(\bar{A}) \xrightarrow{d_{A} + \gamma_{A}} \Sigma^2 A \xrightarrow{\varepsilon_{A}} \Sigma^2 \Bbbk.
\end{equation}
Concretely, let us say that $\Sigma c(a_1, \dots, a_n) \in B_\varphi A$ has weight $n$.
Then:
\begin{itemize}
  \item $\theta_B(\Sigma\id_{\CC}(a)) = \varepsilon_A(d_A a)$ on elements of weight one;
  \item $\theta_B(\Sigma c(a, a')) = \varepsilon_A(\gamma_A(\varphi(c), a, a'))$ on elements of weight two;
  \item $\theta_B$ vanishes on all elements of weight $\geq 3$ since $\im \varphi \subset \PP(2)$.
\end{itemize}

Compare the following with~\cite[Lemma~3.3.3]{HirshMilles2012}.

\begin{proposition}\label{prop:bar-is-curved}
  The data $B_{\varphi}A = (\Sigma\CC(\bar{A}), d_{B}, \theta_{B})$ defines a $\varphi$-curved coalgebra from the semi-augmented $u\PP$-algebra $A$.
\end{proposition}

\begin{proof}
  Let us first check that $\theta_{B}d_{B} = 0$.
  The curvature $\theta_B$ is only nonzero on elements of weight at most two, the summand $d_1$ of the differential preserves the weight, and the summand $d_2$ decreases the weight by exactly one, due to our hypothesis that $\im \varphi \subset \PP(2)$.
  We thus only need to check the equality on elements $x \in B_\varphi A$ of weight $\leq 3$.
  \begin{itemize}
    \item If $x = \Sigma c(a)$ has weight one, then either $c = \id_{\CC}$, in which case $\theta_B(d_B(\Sigma\id_{\CC}(a))) = \varepsilon_A(\bar{d}_A^2 a) = 0$, or $c$ is not a multiple of $\id_{\CC}$, in which case $\theta_B(d_B x) = 0$ by definition.
    \item If $x = \Sigma c(a, a')$ has weight two, then
          \begin{multline}
            d_B x =
            (-1)^{ |c|+1} \Sigma c(\bar{d}_{A} a, a')
            + (-1)^{|c|+|a|+1} \Sigma c(a, \bar{d}_{A} a') \\
            + \id_{\CC}(\bar{\gamma}_A(\varphi(c), a, a')).
          \end{multline}
          and thus $\theta_B d_B x = 0$ simply follows from the compatibility of the $\PP$-structure with the differential.
    \item If $x = \Sigma c(a, a', a'')$, then $\theta_B d_1 x = 0$ for weight reasons.
          Moreover, using the associativity of the $\PP$-structure on $A$, we can compute easily that $\theta_B d_B x = \varepsilon_A(\gamma_A(\varphi \star \varphi, a, a', a''))$, which vanishes from the Maurer--Cartan equation $\varphi \star \varphi = 0$.
  \end{itemize}

  Let us now check that $d_{B}^{2} = \st{\varphi}(\theta_{B})$.
  It is enough to check this when projected on cogenerators, as $d_B^2 = \frac{1}{2} [d_B, d_B]$ and $\st{\varphi}(\theta_B)$ are coderivations (Lemma~\ref{lem:star-coder}).
  Thanks to the explicit description of Definition~\ref{def:phi-theta}, we find that the projection of $\st{\varphi}(\theta_B)(x)$ on cogenerators is nonzero only on elements of weight two and three.
  Moreover, $d_1^2(x) = 0$, the projection of $(d_1d_2+d_2d_1)(x)$ can only be nonzero on elements of weight two, and the projection of $d_2^2(x)$ is only nonzero on elements of weight three.
  If $x = \Sigma c(a, a')$, then:
  \begin{equation}
    \st{\varphi}(\theta_B)(x)|^{\Sigma \bar{A}} = \gamma_{u\PP}(\varphi(c), \varepsilon_A(da) \uni, a') + \gamma_{u\PP}(\varphi(c), a, \varepsilon_A(da')).
  \end{equation}
  This is easily seen to be equal to the projection of $(d_1d_2 + d_2d_1)(x)$ from the computations above.
  A similar computation shows that if $x$ has weight three, then $\st{\varphi}(\theta_B)(x)$ has the same projection on cogenerators as $d_2^2(x)$.
\end{proof}

\begin{example}
  Let $u\PP = u\Ass$, $\varphi = \kappa : \Ass^{\ashk} = (\mathscr{S}^c)^{-1} \Ass^\vee \to \Sigma \Ass$, and let $A$ be a semi-augmented unital associative algebra.
  Then $B_\kappa A$ is the shifted cofree $\Ass^\vee$-coalgebra on $A$, i.e., the cofree coassociative coalgebra on $\Sigma A$.
  This recovers the classical curved bar construction.
\end{example}

\subsection{Adjunction}
\label{sec:adjunction}

\begin{definition}
  Let $\varphi : \CC(2) \to \Sigma\PP(2)$ be an injective twisting morphism, let $(C, d_{C}, \theta_{C})$ be a $\varphi$-curved $\mathscr{S}^c\CC$-coalgebra, and let $A$ be a semi-augmented $u\PP$-algebra.
  The set of $\varphi$-twisting morphisms from $C$ to $A$ is:
  \begin{equation}
    \Tw_{\varphi}(C,A) \coloneqq \{ \beta : \Sigma^{-1}C \to \bar{A} \mid \partial(\beta) + \hst{\varphi}(\beta) + \Theta_C^{A} = 0 \},
  \end{equation}
  where $\partial(\beta) = d_A \beta + (\Sigma^2\beta) d_C$, and $\hst{\varphi}(\beta)$ and $\Theta_C^A$ are given by:
  \begin{align}
    \hst{\varphi}(\beta) & : C \xrightarrow{\Delta_{C}} \Sigma\CC(\Sigma^{-1}C) \xrightarrow{\Sigma \varphi(\beta)} \Sigma^2 u\PP(A) \xrightarrow{\Sigma^2 \gamma_{A}} \Sigma^2 A, \\
    \Theta_C^A           & : C \xrightarrow{\Sigma \theta_{C}} \Sigma^2 \Bbbk \uni \to \Sigma^2 A,
  \end{align}
  and $\Bbbk \uni \to A$ is defined using the action of $\uni \in u\PP(0)$ on $A$.
\end{definition}

We then have the following ``Rosetta Stone'' (cf.~\cite{LodayVallette2012}):
\begin{proposition}
  \label{prop:rosetta}
  Let $C$ be a $\varphi$-curved $\mathscr{S}^c\CC$-coalgebra and $A$ be a semi-augmented $u\PP$-algebra.
  Then there are natural bijections (in particular, $\Omega_{\varphi}$ and $B_{\varphi}$ are adjoint):
  \begin{equation}
    \begin{aligned}
      \Hom_{\text{\textup{sem.aug.}} u\PP \text{\textup{-alg}}}(\Omega_{\varphi}C, A)
       & \cong \Tw_{\varphi}(C,A)                                                                  \\
       & \cong \Hom_{\varphi\text{\textup{-curved} } \CC \text{\textup{-coalg}}}(C, B_{\varphi}A).
    \end{aligned}
  \end{equation}
\end{proposition}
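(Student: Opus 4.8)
The plan is to establish the two natural bijections by exhibiting the ``Rosetta stone'' correspondence that identifies both Hom-sets with the intermediate set $\Tw_{\varphi}(C,A)$ of twisting morphisms. The strategy follows the classical pattern (as in~\cite[Section~6.5]{LodayVallette2012} and~\cite{HirshMilles2012}), but with the bookkeeping adapted to the curved and semi-augmented setting. First I would recall that, as graded modules, a $u\PP$-algebra morphism out of the quasi-free algebra $\Omega_{\varphi}C = (u\PP(\Sigma^{-1}C), d_{\Omega})$ is freely and uniquely determined by its restriction to the generators $\Sigma^{-1}C$, and dually a $\CC$-coalgebra morphism into the cofree coalgebra $B_{\varphi}A = (\CC(\Sigma\bar{A}), d_{B}, \theta_{B})$ is freely and uniquely determined by its corestriction onto the cogenerators $\Sigma\bar{A}$. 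In both cases a degree-preserving map $\beta : C \to A$ (after absorbing the suspensions $\Sigma^{-1}$ and $\Sigma$) is the underlying datum; the content of the proposition is that the differential-compatibility condition on each side translates \emph{exactly} into the twisting-morphism equation $\partial(\beta) + \hst{\bar\varphi}(\beta) = \Theta^{A}$ defining $\Tw_{\varphi}(C,A)$.

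The main work is to carry out this translation on each side. For the left bijection $\Hom(\Omega_{\varphi}C, A) \cong \Tw_{\varphi}(C,A)$, I would take an algebra map $f_{\beta}: \Omega_{\varphi}C \to A$ freely extending a given $\beta$ and compute the chain-map condition $f_{\beta} \circ d_{\Omega} = d_{A} \circ f_{\beta}$ by evaluating on generators $\Sigma^{-1}C$. Since $d_{\Omega} = -d_{0} + d_{1} - d_{2}$, the three pieces should produce precisely the three terms of the Maurer--Cartan-type equation: $d_{1}$ yields (up to sign) the contribution of the predifferential $d_{C}$ which assembles with $d_{A}\circ\beta$ into $\partial(\beta)$, the term $d_{2}$ (built from $\Delta_C$ and $\bar\varphi$) produces the convolution term $\hst{\bar\varphi}(\beta)$ after applying $\gamma_A$, and the curvature piece $d_{0}$ (built from $\theta_{C}$ landing in $\Bbbk\uni$) produces the target curvature term $\Theta^{A}$. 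For the right bijection $\Tw_{\varphi}(C,A) \cong \Hom(C, B_{\varphi}A)$, I would symmetrically take the cofree coextension $g_{\beta}: C \to B_{\varphi}A$ of $\beta$ and check that the conditions that $g_{\beta}$ commutes with the predifferentials \emph{and} intertwines the curvatures $\theta_{C}$ and $\theta_{B}$ are jointly equivalent to the same twisting equation, using the explicit formulas for $d_{B} = d_{1}+d_{2}$ and $\theta_{B}$ given above; here the $\varepsilon_{C}\oplus\varphi$ and $d_A+\gamma_A$ factors in $\theta_{B}$ are exactly what is needed to reproduce $\Theta^{A}$ and the differential term.

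The hard part, and the place where the curved/semi-augmented framework genuinely departs from the classical uncurved story, will be tracking the curvature condition and verifying that the \emph{two} equations on the coalgebra side ($g_\beta$ respecting $d_B$ and $g_\beta$ respecting $\theta_B$) collapse to the \emph{single} twisting equation rather than imposing an extra constraint --- this is where the extra axiom $\theta_{C}d_{C}=0$ in Definition~\ref{def:curved-coalg} and the normalization $\varepsilon_{A}(\uni)=1$ must be invoked to make everything consistent, and where the need to work with the augmentation ideal $\bar{A}$ (so that $B_\varphi A$ is built on $\Sigma\bar A$ rather than $\Sigma A$) becomes essential. I expect the sign bookkeeping from the suspensions and from the Koszul rule to be the most error-prone routine part, but no conceptual obstacle: the proof reduces, as in~\cite[Section~5.2]{HirshMilles2012} and the associative prototype $\PP=\Ass$, to the observation that both freeness/cofreeness and the defining equations of the bar and cobar constructions were \emph{designed} so that these adjunction bijections hold, so I would finally check naturality in $C$ and $A$ by the usual argument that all constructions are functorial and the identifications commute with the relevant structure maps.
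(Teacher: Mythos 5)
Your proposal is correct and follows essentially the same route as the paper's proof: use (co)freeness to reduce everything to generators/cogenerators, check that the three pieces $d_{0}, d_{1}, d_{2}$ of $d_{\Omega}$ evaluated on $\Sigma^{-1}C$ reproduce exactly the three terms $\Theta^{A}$, $\partial(\beta)$, and $\hst{\bar\varphi}(\beta)$ of the twisting equation, treat the bar side symmetrically, and dispatch naturality as a routine diagram chase. The only difference is one of emphasis: the paper writes out the cobar-side computation and dismisses the bar side as ``extremely similar'' (citing \cite[Theorem~3.4.1]{HirshMilles2012}), whereas you correctly flag the bar side --- where commuting with $d_{B}$ and intertwining the curvatures must jointly collapse to the single Maurer--Cartan equation --- as the place requiring the most care.
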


\begin{proof}
  Let us first prove the existence of the first bijection.
  Given $\beta \in \Tw_{\varphi}(C,A)$, we let $f_{\beta} : u\PP(\Sigma^{-1}C) \to A$ be the $u\PP$-algebra morphism given on generators by $\beta$.
  We must check that $f_{\beta} d_{\Omega} = d_{A} f_{\beta}$.
  As we are working with derivations and morphisms, it is enough to check this on the generators $\Sigma^{-1} C$.
  Recall that $d_\Omega = -d_0 -d_1 -d_2$, where the summands are respectively defined using the curvature, the predifferential, and the coalgebra structure of $C$.
  The restrictions of the maps involved are:
  \begin{itemize}
    \item $f_{\beta} d_{0} |_{\Sigma^{-1} C} = \Theta_C^{A}$;
    \item $f_{\beta} d_{1} |_{\Sigma^{-1} C} = (\Sigma^2\beta) d_{C}$;
    \item $f_{\beta} d_{2} |_{\Sigma^{-1} C} = \Sigma^{-1} \hst{\varphi} \beta$ is obtained as the composite $\Sigma^{-1} C \xrightarrow{\Delta_{C}} \CC(\Sigma^{-1}C) \xrightarrow{\varphi(\beta)} \Sigma u\PP(A) \xrightarrow{\Sigma \gamma_{A}} \Sigma A$;
    \item $d_{A} f_{\beta} |_{\Sigma^{-1} C} = d_{A} \beta$.
  \end{itemize}
  We can thus compute that:
  \begin{equation*}
    \begin{aligned}
      (d_A f_\beta - f_\beta d_\Omega) |_{\Sigma^{-1} C}
       & = (d_A f + f d_0 + f d_1 + f d_2)|_{\Sigma^{-1} C}            \\
       & = d_A \beta + \Theta_C^A + (\Sigma^2\beta) d_C + \hat{\star}_\varphi \beta \\
       & = \Theta_C^A + \partial(\beta) + \hat{\star}_\varphi \beta \\
       & = 0 \text{ by the Maurer--Cartan equation}.
    \end{aligned}
  \end{equation*}

  Conversely, given $f : \Omega_{\varphi}C \to A$, then we can define $\beta \coloneqq f|_{\Sigma^{-1} C}$.
  The same proof as above but in the reverse direction shows that the compatibility of $f$ with the differentials implies the Maurer--Cartan equation.
  Moreover, the two constructions are inverse to each other.

  The definition of the second bijection is similar (see also the proof of~\cite[Theorem~3.4.1]{HirshMilles2012} for the case of (co)operads).
  Given a twisting morphism $\beta \in \Tw_\varphi(C,A)$, the morphism $g_\beta : C \to B_\varphi A$ is defined as the unique morphism of $\mathscr{S}^c\CC$-coalgebras $C \to \Sigma \CC(\bar{A})$ with corestriction $\beta$.
  The fact that $g_\beta$ commutes with the predifferentials and the curvatures of $C$ and $B_\varphi A$ follows from the Maurer--Cartan equation, as a similar argument shows.

  All that remains is checking that the two bijections are natural in terms of $C$ and $A$.
  This is a simple exercise in commutative diagrams.
\end{proof}

\section{Koszul duality of unitary algebras}
\label{sec:kosz-unit-algebra}

\subsection{Algebras with quadratic-linear-constant relations}
\label{sec:algebr-with-quadr}

We now define the type of algebras for which we will develop a Koszul duality theory, namely algebras with quadratic-linear-constant (QLC) relations.
For this we adapt the notion of a monogenic algebra of~\cite[Section~4.1]{Milles2012} (see Section~\ref{sec:kosz-dual-quadr}).
We still assume that we are given a unital version $u\PP = \Free(\uni \oplus E) / (R + R')$ of a binary quadratic operad $\PP = \Free(E) / (R)$ as in Section~\ref{sec:unital-versions}.

\begin{definition}
  \label{def:alg-qlc}
  An $u\PP$-algebra with QLC relations is a $u\PP$-algebra $A$ with $d_{A} = 0$, equipped with a presentation by generators and relations:
  \begin{equation}
    A = u\PP(V) / I,
  \end{equation}
  satisfying the two conditions:
  \begin{enumerate}
    \item the ideal $I$ is generated by $S \coloneqq I \cap (\uni \oplus V \oplus E(V))$ (where $E(V) = E \otimes_{\Sigma_{2}} V^{\otimes 2}$),
    \item the relations in $S$ all contain quadratic terms, $S \cap (\uni \oplus V) = 0$.
  \end{enumerate}
\end{definition}

\begin{remark}
  Definition~\ref{def:alg-qlc} implies that $S$ is the graph of some map
  \begin{equation}
    \label{eq:alpha}
    \alpha = (\alpha_{0} \oplus \alpha_{1}) : qS \to \Bbbk \uni \oplus V,
  \end{equation}
  i.e., $S = \{ x + \alpha(x) \mid x \in qS\}$.
  Moreover, such an algebra is automatically semi-augmented (Definition~\ref{def:semi-aug}) as we are working over a field $\Bbbk$.
\end{remark}

\begin{definition}
  \label{def:quad-red}
  Given a $u\PP$-algebra $A$ with QLC relations as above, let $qS$ be the projection of $S$ onto $E(V)$.
  Then the quadratic reduction $qA$ of $A$ is the monogenic $\PP$-algebra obtained by:
  \begin{equation}
    qA \coloneqq \PP(V) / (qS).
  \end{equation}
\end{definition}

Until the end of this section, $A$ will be a $u\PP$-algebra with QLC relations, with the same notation as in this subsection.

\subsection{Koszul dual coalgebra}
\label{sec:kosz-dual-coalg}

Let $\PP^{\ashk}$ be the Koszul dual cooperad of $\PP$, with an operad-twisting morphism $\kappa : \PP^{\ashk} \to \PP$ (see Section~\ref{sec:quadratic-operads}).
The theory of~\cite{Milles2012} (see Section~\ref{sec:kosz-dual-quadr}) defines a Koszul dual $\mathscr{S}^{c}\PP^{\ashk}$-coalgebra $qA^{\ashk}$ from the quadratic reduction $qA$:
\begin{equation}
  qA^{\ashk} \coloneqq \Sigma \PP^{\ashk}(V, \Sigma qS).
\end{equation}

Using the map $\alpha$ from Equation~\eqref{eq:alpha}, we may define $d_{A^{\ashk}} : qA^{\ashk} \to \Sigma qA^{\ashk}$ to be the unique coderivation whose corestriction is given by:
\begin{equation}
  d_{A^{\ashk}}|^{\Sigma V} : qA^{\ashk} \twoheadrightarrow \Sigma(\Sigma qS) \xrightarrow{\Sigma^2 \alpha_{1}} \Sigma(\Sigma V) \subset \Sigma qA^{\ashk}.
\end{equation}

Moreover, we define a map $\theta_{A^{\ashk}}$ of degree $-2$ by:
\begin{equation}
  \theta_{A^{\ashk}} : \Sigma^{-1}qA^{\ashk} \twoheadrightarrow \Sigma qS \xrightarrow{\Sigma \alpha_{0}} \Sigma \Bbbk \uni.
\end{equation}
As before, for $x \in qA^{\ashk}$, we let $\theta_{A^{\ashk}}(\Sigma^{-1}x) = \Theta(x) \cdot \Sigma \uni$ where $\Theta(x) \in \K$.

We now define the Koszul dual coalgebra of $A$ by adapting~\cite[Section~4.2]{HirshMilles2012}.
The proof of the following proposition is heavily inspired from that of~\cite[Lemma~4.1.1]{HirshMilles2012}.

\begin{proposition}
  \label{prop:def-koszul-dual}
  The following data defines a $\kappa$-curved $\mathscr{S}^c\PP^{\ashk}$-coalgebra, called the Koszul dual curved coalgebra of $A$:
  \begin{equation}
    A^{\ashk} \coloneqq (qA^{\ashk}, d_{A^{\ashk}}, \theta_{A^{\ashk}}).
  \end{equation}
\end{proposition}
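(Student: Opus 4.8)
The plan is to verify directly the two defining identities of a $\kappa$-curved coalgebra (Definition~\ref{def:curved-coalg}), namely
\[
  d_{A^{\ashk}}^{2} = \st{\bar\kappa}(\theta_{A^{\ashk}}), \qquad \theta_{A^{\ashk}} d_{A^{\ashk}} = 0,
\]
after first establishing that $d_{A^{\ashk}}$ is a well-defined coderivation \emph{of} $qA^{\ashk}$, i.e.\ that its image lands in the subcoalgebra $qA^{\ashk} \subset \Sigma\PP^{\ashk}(V)$ and not merely in the ambient cofree coalgebra. Throughout I would exploit the weight grading of $qA^{\ashk} = \Sigma\PP^{\ashk}(V, \Sigma qS)$ by the number of generators from $V$, under which $\Sigma V$ sits in weight $1$, the corelations $\Sigma^{2} qS$ in weight $2$, the coderivation $d_{A^{\ashk}}$ lowers weight by $1$, and $\theta_{A^{\ashk}}$ is supported on weight $2$ (with target in weight $0$). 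A short bookkeeping shows that $\st{\bar\kappa}(\theta_{A^{\ashk}})$ likewise lowers weight by $2$: the coproduct $\Delta$ preserves weight, $\Theta$ is nonzero only on the weight-$2$ summand $\Sigma^{2}qS$ where it outputs $\uni$, and $\bar\kappa$ retains only the binary part of $\PP^{\ashk}$. Consequently both sides of each identity are weight-homogeneous. Since $d_{A^{\ashk}}^{2}$ and $\st{\bar\kappa}(\theta_{A^{\ashk}})$ are coderivations with values in $\Sigma\PP^{\ashk}(V)$ (the latter by the same computation as in the cobar construction of Section~\ref{sec:cobar-construction}), the first identity is detected on the cogenerators $\Sigma V$; the identity $\theta_{A^{\ashk}}d_{A^{\ashk}}=0$ is checked directly and, by the weight count, is supported on the weight-$3$ part. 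In every case the nontrivial content is thus confined to weight $3$.

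I would next reduce to the structure of $\PP^{\ashk}$ in low arity. Because $\PP$ is binary quadratic, $\PP^{\ashk}$ is cogenerated by $\Sigma E$ with its weight-$3$ part controlled precisely by the quadratic relations $R$; dually, the weight-$3$ part of $qA^{\ashk}$ is governed by $\Sigma qS$ composed with a binary cogenerator. The graph presentation $S = \{x + \alpha(x) \mid x \in qS\}$ from~\eqref{eq:alpha}, together with the hypothesis that $I$ is generated by $S$ (Definition~\ref{def:alg-qlc}), translates into a compatibility relating $\alpha_{0}$, $\alpha_{1}$ and the arity-$3$ (co)composition. Unwinding this compatibility is exactly what yields (a) that the weight-$2$ component of $d_{A^{\ashk}}$ applied to a weight-$3$ element lands inside $\Sigma^{2} qS$, so $d_{A^{\ashk}}$ preserves $qA^{\ashk}$, and (b) that the constant part $\alpha_{0}$ produced after one application of $d_{A^{\ashk}}$ cancels, giving $\theta_{A^{\ashk}} d_{A^{\ashk}} = 0$. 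The curvature identity $d_{A^{\ashk}}^{2} = \st{\bar\kappa}(\theta_{A^{\ashk}})$ then records, in weight $3$, that applying $\alpha_{1}$ twice agrees with applying $\alpha_{0}$ once and reabsorbing the resulting $\uni$ through a binary operation via $\kappa$. This is the direct algebra-analogue of the curved-cooperad verification in~\cite[Section~4.2]{HirshMilles2012}, with Millès's cofree coalgebra $\Sigma\PP^{\ashk}(V,\Sigma qS)$ (Section~\ref{sec:kosz-dual-quadr}) playing the role of the Koszul dual cooperad.

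The main obstacle is step (a), the well-definedness of $d_{A^{\ashk}}$ as an endomorphism of $qA^{\ashk}$, and this is exactly where I expect the binary hypothesis on $\PP$ to be essential. The point is that $\theta_{A^{\ashk}}$ takes values in $\uni$, of arity $0$, and can only be reabsorbed into an operation on $qA^{\ashk}$ through a binary cogenerator of $\PP^{\ashk}$ (via $\kappa$, whose image lies in $E = \PP(2)$); were $\PP$ to have generators of higher arity, the map $\st{\bar\kappa}(\theta_{A^{\ashk}})$ would fail to close up at the weight-$3$ level and $d_{A^{\ashk}}$ would not preserve the corelations. Concretely, I would verify on the arity-$3$ component that both cocompositions producing weight-$2$ elements from a weight-$3$ element factor through $\Sigma^{2}qS$, using that $qS$ is the projection of $S$ and that $S$ generates $I$; this weight-$3$ coherence holds automatically for binary $\PP$ but need not for higher arities. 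As a more conceptual alternative I could instead realize $A^{\ashk}$ as a sub-curved-coalgebra of the bar construction $B_{\kappa}A$ of Section~\ref{sec:bar-construction} (already known to be a $\kappa$-curved $\PP^{\ashk}$-coalgebra), via the map induced by the inclusion $V \hookrightarrow \bar{A}$ of generators, and inherit $d_{A^{\ashk}}$ and $\theta_{A^{\ashk}}$ by restriction; this would sidestep some sign bookkeeping but still requires checking, in weight $3$, that the image is closed under $d_{B}$ and that the restrictions of $d_{B}$ and $\theta_{B}$ reproduce $d_{A^{\ashk}}$ and $\theta_{A^{\ashk}}$.
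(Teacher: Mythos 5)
Your proposal follows essentially the same route as the paper's proof: reduce all three verifications (that $d_{A^{\ashk}}$ preserves $qA^{\ashk}$, that $d_{A^{\ashk}}^{2} = \st{\bar\kappa}(\theta_{A^{\ashk}})$, and that $\theta_{A^{\ashk}}d_{A^{\ashk}} = 0$) to weight-$3$ elements using the binary hypothesis, then exploit the two cocompositions of such an element in $E \otimes (\Sigma V \otimes \Sigma^{2}qS)$ and $E \otimes (\Sigma^{2}qS \otimes \Sigma V)$ together with the graph presentation $S = \{x + \alpha(x)\}$ and the condition that $S = I \cap (\uni \oplus V \oplus E(V))$ to identify the weight-$2$, $1$, and $0$ components, exactly as in the paper's adaptation of~\cite[Lemma~4.1.1]{HirshMilles2012}. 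The differences are only presentational: the paper makes your ``compatibility'' explicit via the maps $g, g'$ sending $\Delta(Y)$ into the ideal $(S)$, and your alternative route through the bar construction $B_{\kappa}A$ is not the one the paper takes.
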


\begin{proof}
  We must show that $d_{A^{\ashk}}(qA^{\ashk}) \subset \Sigma qA^{\ashk}$, that $\st{\kappa}(\theta_{A^{\ashk}}) = d_{A^{\ashk}}^{2}$, and that $\theta_{A^{\ashk}} d_{A^{\ashk}} = 0$.
  Just like in the proof of Proposition~\ref{prop:bar-is-curved}, we can decompose $qA^{\ashk} = \Sigma \PP^{\ashk}(V, \Sigma qS)$ by weight, i.e., the number $n$ of generators $v_i \in V$ in an expression of the form $\Sigma x(v_1, \dots, v_n)$.
  The differential $d_{A^{\ashk}}$ decreases the weight by exactly one, while $\theta_{A^{\ashk}}$ is only nonzero on elements of weight two.
  Using considerations similar to the proof of Proposition~\ref{prop:bar-is-curved}, we only need to check the three equalities above on elements of $qA^{\ashk}$ of weight three.

  Let thus $Y \in (qA^{\ashk})^{(3)}$ be an element of weight three.
  Its image under the $\mathscr{S}^c\PP^{\ashk}$-structure map of $qA^{\ashk}$ is the element $\Delta(Y) \in \Sigma\PP^{\ashk}(\Sigma^{-1}qA^{\ashk})$.
  By definition of $qA^{\ashk}$, we have the following:
  \begin{equation}
    \Delta(Y) \in \bigl( \Sigma^2 E \otimes (V \otimes \Sigma qS) \bigr) \cap \bigl( \Sigma^2 E \otimes (\Sigma qS \otimes V) \bigr) \subset \Sigma\PP^{\ashk}(\Sigma^{-1}qA^{\ashk}).
  \end{equation}

  In other words, we have two decompositions
  \begin{equation}
    \Delta(Y) = \sum_{i} (\Sigma^2 \rho_{i})\bigl(v_{i}, \Sigma X_{i}\bigr) = \sum_{j} (\Sigma^2 \rho'_{j})\bigl(\Sigma X'_{j}, v'_{j}\bigr),
  \end{equation}
  where $\rho_{i}, \rho'_{j} \in E$, $v_{i}, v'_{j} \in V$, and $X_{i}, X'_{j} \in qS$.
  Then we get, using the fact that $d_{A^{\ashk}}$ is a coderivation and its projection on cogenerators:
  \begin{equation}
    d_{A^{\ashk}}(Y)
      = -\sum_{i} (\Sigma^2\rho_{i})\bigl(v_{i}, \Sigma\alpha_{1}(X_{i})\bigr) + \sum_{j} (\Sigma^2\rho'_{j})\bigl(\Sigma \alpha_{1}(X'_{j}), v'_{j}\bigr)
      \in qA^{\ashk}.
  \end{equation}
  And similarly, we have:
  \begin{multline}
    \st{\kappa}(\theta_{A^{\ashk}})(Y) = - \sum_{i} \Sigma^2 \bigl(\rho_{i} \circ_2 \alpha_0(X_i)\bigr) \cdot \Sigma v_{i} + \sum_{j} \Sigma^2 \bigl(\rho'_{j} \circ_1 \alpha_0(X'_j)\bigr) \cdot \Sigma v'_{j} \\
    \in \Sigma^2(\Sigma V) \subset \Sigma^{2}qA^{\ashk},
  \end{multline}
  where $\rho_i \circ_2 \alpha_0(X_i), \rho'_j \circ_1 \alpha_0(X'_j) \in u\PP(1) \cong \K \id$ are scalars.
  We know that $X + \alpha_1(X) + \alpha_0(X) \in S$ for any element $X \in qS$.
  If we add and subtract the extra term $\Delta(Y)$ from the sum of the previous equations, we obtain an element of the ideal generated by $S$, i.e., we have (where we use the condition of Definition~\ref{def:alg-qlc}):
  \begin{equation}
    \label{eq:complicated}
    d_{A^{\ashk}}(Y) + \st{\kappa}(\theta_{A^{\ashk}})(Y)
    \in \Sigma^2 \bigl((S) \cap (\uni \oplus V \oplus E(V))\bigr) = \Sigma^{2} S.
  \end{equation}
  Since $S$ is the graph of $\alpha$, the expression of Equation~\eqref{eq:complicated} is of the form $\Sigma^2\bigl(x + \alpha_{1}(x) + \alpha_{0}(x)\bigr)$ for some $x \in qS$.
  The element $d_{A^{\ashk}}(Y)$ is of weight two, while $\st{\kappa}(\theta_{A^{\ashk}})(Y)$ is of weight one.
  Thus we must have that $x$ is of weight two, and so by identifying each weight component we obtain that:
  \begin{itemize}
    \item the element $d_{A^{\ashk}}(Y) = \Sigma^2 x$ belongs to $\Sigma^{2} qS = (qA^{\ashk})^{(2)}$;
    \item the element $d_{A^{\ashk}}^{2}(Y)$ is equal to $(\Sigma^2 \alpha_{1})(d_{A^{\ashk}}(Y))$, which we know from Equation~\eqref{eq:complicated} is equal to $\st{\kappa}(\theta_{A^{\ashk}})(Y) = \Sigma^2 \alpha_1(x)$ (i.e., the weight one part);
    \item similarly, $\theta_{A^{\ashk}} d_{A^{\ashk}}(Y)$ is equal to $\Sigma^2\alpha_{0}(d_{A^{\ashk}}(Y))$, which vanishes because there is no element of weight zero in Equation~\eqref{eq:complicated}.
    \qedhere
  \end{itemize}
\end{proof}

\subsection{Main theorem}
\label{sec:main-theorem}

Let us now define $\varkappa : \Sigma^{-1}qA^{\ashk} \to \bar{A}$ of degree $-1$ by:
\begin{equation}
  \varkappa : \Sigma^{-1}qA^{\ashk} \twoheadrightarrow V \hookrightarrow \bar{A}.
\end{equation}

\begin{proposition}
  The morphism $\varkappa$ satisfies the curved Maurer--Cartan equation, i.e., it is an element of $\Tw_{\varphi}(qA^{\ashk}, A)$ from Proposition~\ref{prop:rosetta}:
  \begin{equation}
    \partial(\varkappa) + \hst{\kappa}(\varkappa) + \Theta_C^{A} = 0.
  \end{equation}
\end{proposition}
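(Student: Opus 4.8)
The plan is to verify the identity pointwise after reducing to a single weight. Since $d_A = 0$ by hypothesis, the differential term collapses to $\partial(\varkappa) = \varkappa \circ d_{A^{\ashk}}$ (the sign being $+$ because $|\varkappa| = -1$). Equip $qA^{\ashk}$ with the weight grading by number of generators, as in the proof of Proposition~\ref{prop:def-koszul-dual}, so that the weight-$1$ part is $\Sigma V$ and the weight-$2$ part is $\Sigma^2 qS$. I claim every term in the equation is supported on weight $2$: the map $\varkappa$ only sees the weight-$1$ component of its argument while $d_{A^{\ashk}}$ lowers weight by one, so $\varkappa \circ d_{A^{\ashk}}$ vanishes outside weight $2$; the map $\Theta^A$ factors through $\theta_{A^{\ashk}}$, which is supported on $\Sigma^2 qS$; and in $\hst{\bar\kappa}(\varkappa)$ the morphism $\bar\kappa$ is concentrated on the binary cogenerators $\PP^{\ashk}(2) = \Sigma E$ (here we use that $\PP$ is binary), forcing exactly two inputs, each of which must be of weight $1$ to survive $\varkappa$. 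Hence it suffices to check the equation on an element $Y = \Sigma^2 X$ with $X \in qS$.

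On such a $Y$ the three terms are computed directly. First, $d_{A^{\ashk}}(\Sigma^2 X) = \Sigma\alpha_1(X)$ by definition of the predifferential, so $\partial(\varkappa)(Y) = \varkappa(\Sigma\alpha_1(X)) = \alpha_1(X) \in V \subset A$ (up to sign). Second, by the defining corelation of $qA^{\ashk} = \Sigma\PP^{\ashk}(V, \Sigma qS)$, the relevant binary component of $\Delta_C(\Sigma^2 X)$ in $\PP^{\ashk}(2) \otimes (\Sigma V)^{\otimes 2}$ is precisely the image of $\Sigma^2 X$; applying $\bar\kappa \circ \varkappa$ and then $\gamma_A$ recovers, exactly as in binary quadratic Koszul duality, the image $\gamma_A(X)$ of $X \in E(V)$ inside $A$, so that $\hst{\bar\kappa}(\varkappa)(Y) = \gamma_A(X)$ (up to sign). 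Third, $\Theta^A(Y) = \theta_{A^{\ashk}}(\Sigma^2 X) = \alpha_0(X) \in \Bbbk\uni \subset A$.

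Finally I invoke that $S$ is the graph of $\alpha = \alpha_0 \oplus \alpha_1$ from Equation~\eqref{eq:alpha}: since $A = u\PP(V)/I$ with $I$ generated by $S = \{ x + \alpha(x) \mid x \in qS \}$, the class of $X$ in $A$ satisfies $\gamma_A(X) = -\alpha_0(X) - \alpha_1(X)$. Substituting into the left-hand side, the two occurrences of $\alpha_1(X)$ cancel and what remains is precisely $\alpha_0(X) = \Theta^A(Y)$, as required. The only genuine work is the sign bookkeeping together with the identification of the binary component of $\Delta_C$ with the corelation data; both are entirely parallel to the computation in the proof of Proposition~\ref{prop:def-koszul-dual} (itself modeled on \cite[Lemma~4.1.1]{HirshMilles2012}), and I expect this sign-and-coproduct matching, rather than any conceptual difficulty, to be the main obstacle.
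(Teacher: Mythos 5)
Your proof is correct and follows essentially the same route as the paper's: both arguments reduce the Maurer--Cartan equation to the weight-$2$ component $\Sigma^{2}qS$, identify the three terms as (signed versions of) $\alpha_{1}(X)$, the class $\gamma_{A}(X)$, and $\alpha_{0}(X)$, and conclude from the fact that $S$ is the graph of $\alpha$ and $\gamma_{A}(S) = 0$ in $A = u\PP(V)/(S)$. The paper phrases the last step as the combination landing in the image of the graph of $\alpha$ under $\gamma_{A}$ rather than as your explicit substitution-and-cancellation, and like you it defers the sign bookkeeping, so the two proofs match in substance.
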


\begin{proof}
  We can rewrite the above equation as:
  \begin{equation}
    (\Sigma^2\varkappa) d_{A^{\ashk}} + \gamma_{A} \circ (\kappa(\varkappa)) \circ \Delta_{qA^{\ashk}} + \Theta_C^{A} = 0.
  \end{equation}
  Moreover, by checking the definitions, we see that:
  \begin{itemize}
    \item $(\Sigma^2\varkappa) d_{A^{\ashk}}$ is obtained as $qA^{\ashk} \twoheadrightarrow \Sigma^{2} qS \xrightarrow{\Sigma^2\alpha_{1}} \Sigma^2 V \hookrightarrow \Sigma^2 A$;
    \item $\Theta_C^{A}$ is obtained as $qA^{\ashk} \twoheadrightarrow \Sigma^{2} qS \xrightarrow{\Sigma^2\alpha_{0}} \Sigma^2 \Bbbk \uni \hookrightarrow \Sigma^2 A$;
    \item $\hst{\kappa}(\varkappa)$ vanishes everywhere except on $(qA^{\ashk})^{(2)} = \Sigma^{2} qS$, where it is equal to $\gamma_{A} \circ \Delta_{qA^{\ashk}}$.
  \end{itemize}

  The image of $\hst{\kappa}(\varkappa) + \varkappa d_{A^{\ashk}} + \Theta_C^{A}$ is thus included in the image of the graph of $\alpha$ under $\gamma_{A}$.
  But this graph is $S$, and $\gamma_{A}(S) = 0$ in $A = u\PP(V)/(S)$.
\end{proof}

\begin{definition}
  The $u\PP$-algebra $A$ is said to be Koszul if the $\PP$-algebra $qA$ is Koszul in the sense of~\cite{Milles2012}.
\end{definition}

Using the Rosetta Stone (Proposition~\ref{prop:rosetta}), $\varkappa$ defines a morphism $f_{\varkappa} : \Omega_{\kappa}A^{\ashk} \to A$.
Recall that $qA$ is Koszul if and only if the induced morphism $\Omega_{\kappa}(qA^{\ashk}) \to qA$ is a quasi-isomorphisms~\cite[Theorem~4.9]{Milles2012}.
Our definition is justified by the following theorem:
\begin{theorem}
  \label{thm:main}
  If $A$ is Koszul, then $f_{\varkappa} : \Omega_{\kappa}A^{\ashk} \to A$ is a cofibrant resolution of $A$ in the semi-model category of $u\PP$-algebras defined in~\cite[Theorem~12.3.A]{Fresse2009}.

\end{theorem}

\begin{proof}
  Let us filter $A$ and $\Omega_{\kappa}A^{\ashk}$ by the weight in terms of $V$.
  It is clear that $f_{\varkappa}$ is compatible with this filtration.
  The summands $d_{0}$ and $d_{1}$ of $d_{\Omega}$ strictly decrease this filtration, while $d_{2}$ preserves it.
  Thus, on the first pages of the associated spectral sequences, we obtain the morphism:
  \begin{equation}
    \Omega_{\kappa}(qA^{\ashk}) \oplus \Bbbk \uni \to qA \oplus \Bbbk \uni.
  \end{equation}

  Our hypothesis on $qA$ and~\cite[Theorem~4.9]{Milles2012} implies that this is a quasi-isomorphism, i.e., we have an isomorphism on the second pages of the spectral sequences.
  The filtration is exhaustive and bounded below, therefore $f_{\varkappa}$ itself is a quasi-isomorphism (see e.g.~\cite[Theorem~3.5]{McCleary2001}).

  It remains to check that $\Omega_{\kappa}A^{\ashk}$ is cofibrant in the semi-model category from~\cite[Theorem~12.3.A]{Fresse2009}, which applies as we are working over a field of characteristic zero and so $u\PP$ is always $\Sigma_{*}$-cofibrant.
  The cobar construction is quasi-free, i.e., free as an algebra if we forget the differential.
  It is moreover equipped with a filtration where $\Sigma x(v_1, \dots, v_n) \in A^{\ashk}$ (for $x \in \PP^{\ashk}$ and $v_i \in V$) is in filtration level $n$.
  Let us check that this filtration satisfies the hypotheses of~\cite[Proposition~12.3.8]{Fresse2009}.
  The summands $d_0$ and $d_1$ of the differential decrease $n$.
  Since $\PP$ is binary, $\im \kappa \subset \PP^{\ashk}(2)$, so $d_2$ decomposes an element of weight $n$ as a product of elements of weights $n_1+n_2=n$ and so $n_1, n_2 < n$ (as there is no arity zero generator in $\PP^{\ashk}$).
  It follows that $\Omega_{\kappa}A^{\ashk}$ is cofibrant.
\end{proof}

\section{Application: symplectic Poisson \texorpdfstring{$n$}{n}-algebras}
\label{sec:appl-sympl-n}

\subsection{Definition}
\label{sec:definition}

In this section, with deal with Poisson $n$-algebras for some integer $n$.
We will abbreviate as $\Lie_{n}$ the operad of Lie algebras shifted by $n-1$, i.e., $\Lie_{n} \coloneqq \mathscr{S}^{1-n}\Lie$.
Recall from Example~\ref{exa:QLC-operad} the operad $u\Pois_n \cong \Com \circ \Lie_n$, generated by two binary operations $\mu$ (product) and $\lambda$ (bracket) and a unary operation $\uni$.

\begin{definition}
  \label{def:sympl-poisson}
  The $D$th symplectic Poisson $n$-algebra is defined by:
  \begin{equation}
    A_{n;D} \coloneqq \bigl( \Bbbk[x_{1}, \dots, x_{D}, \xi_{1}, \dots, \xi_{D}], \{,\} \bigr).
  \end{equation}
  where the generators $x_{i}$ have degree zero and the $\xi_{i}$ have degree $1-n$.
  The algebra $A_{n;D}$ is free as a unital commutative algebra, and the Lie bracket is defined on generators by:
  \begin{align}
    \{ x_{i}, x_{j} \} & = 0 & \{ \xi_{i}, \xi_{j} \} & = 0 & \{ x_{i}, \xi_{j}\} & = \delta_{ij}.
  \end{align}
\end{definition}

The algebra $A_{n;D} = u\Pois_{n}(V_{n;D}) / (S_{n;D})$ is equipped with a QLC presentation.
The space of generators is $V_{n;D} \coloneqq \R \langle x_{i}, \xi_{j} \rangle$, a graded vector space of dimension $2D$.
We check that the ideal of relations $I_{n;D}$ is generated by the set $S_{n;D}$ given by the three sets of relations fixing the Lie brackets of the generators, namely
\begin{equation}
  S_{n;D} = \R \langle \{x_{i}, x_{j}\}, \{\xi_{i}, \xi_{j}\}, \{x_{i},\xi_{j}\} - \delta_{ij} \uni \rangle.
\end{equation}

\begin{remark}
  We may view $A_{n;D}$ as the Poisson $n$-algebra of polynomial functions on the standard shifted symplectic space $T^{*}\R^{D}[1-n]$.
  The element $x_{i}$ is a polynomial function on the coordinate space $\R^{D}$, and the element $\xi_{j}$ can be viewed as the vector field $\partial / \partial x_{j}$, which is a function on $T^{*}\R^{D}[1-n]$.
\end{remark}

We will fix $n$ and $D$ and drop them from the notation $A = A_{n;D}$ in what follows.

\subsection{Koszul property and explicit resolution}
\label{sec:kosz-prop-expl}

The goal of this section is to prove:

\begin{proposition}
  \label{prop:a-koszul}
  The $u\Pois_{n}$-algebra $A$ is Koszul.
\end{proposition}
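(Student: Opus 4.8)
The statement to prove is that the $u\Pois_{n}$-algebra $A = A_{n;D}$ is Koszul, which by our definition means precisely that its quadratic reduction $qA$ is Koszul in the sense of Millès~\cite{Milles2012}. So the plan is to first compute $qA$ explicitly, and then exhibit a Koszul complex (or some equivalent criterion) witnessing its Koszulity. The quadratic reduction is obtained by projecting the relation space $S_{n;D}$ onto its quadratic part $qS_{n;D}$; since the only nonquadratic term appearing is the constant $\delta_{ij}\uni$ in the relation $\{x_{i},\xi_{j}\} - \delta_{ij}\uni$, the quadratic reduction replaces all the brackets of generators by zero. Thus I expect $qA \cong \Pois_{n}(V) / (\{v,w\} : v,w \in V)$, which is just the free graded-commutative algebra $\Bbbk[x_{1},\dots,x_{D},\xi_{1},\dots,\xi_{D}]$ equipped with the \emph{trivial} (zero) shifted Lie bracket — i.e. $qA = S(V)$, the symmetric algebra on $V$, viewed as a Poisson $n$-algebra with zero bracket.

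The key step is then to prove that $S(V)$, as a monogenic $\Pois_{n}$-algebra with zero bracket, is Koszul. My strategy is to use the product decomposition of the operad, $\Pois_{n} = \Com \circ \Lie_{n}$, and the corresponding compatibility of Koszul duality with distributive laws. The algebra $S(V)$ is the free commutative algebra on $V$ with trivial bracket, so on the one hand it is the ``universal enveloping $n$-algebra'' of the abelian shifted Lie algebra $V$, and on the other hand Koszulity of such a trivial-bracket symmetric algebra should reduce to the classical Koszulity of the polynomial/symmetric algebra (Example~\ref{exa:koszul-complex}) together with the Koszulity of the operad $\Pois_{n}$ itself. Concretely, I would identify the Koszul dual coalgebra $qA^{\ashk} = \Sigma \Pois_{n}^{\ashk}(V, \Sigma qS)$ and show that the associated Koszul complex is acyclic, or equivalently that the canonical map $\Omega_{\kappa}(qA^{\ashk}) \to qA$ is a quasi-isomorphism, by invoking~\cite[Theorem~4.9]{Milles2012}. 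Because the bracket is trivial, the Koszul dual should be a free construction (an exterior/divided-power type coalgebra), and the Koszul complex should factor as a tensor product of the classical acyclic Koszul complexes for the polynomial generators.

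The cleanest route, which I would pursue, is to appeal directly to Knudsen's description recalled at the end of Section~\ref{sec:fact-homol}: the symmetric algebra $S(\Sigma^{1-n}\mathfrak{g})$ on a shifted Lie algebra is the free $\Pois_{n}$-algebra interpretation, and for $\mathfrak{g}$ abelian this is exactly our $qA$. Koszulity of such free objects over a Koszul operad is standard, since free algebras over a Koszul operad are always Koszul (the bar construction computes the expected Tor, and the Koszul complex is acyclic by a direct filtration argument on the number of generators). Thus I would set up a filtration of the Koszul complex $(qA^{\ashk} \circ_{\varkappa} qA)$ by polynomial degree, and on the associated graded reduce to the tensor product of the elementary two-term acyclic complexes $\Bbbk[x] \otimes \Lambda^{c}(dx)$, one for each generator, exactly as in Example~\ref{exa:koszul-complex}.

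\textbf{Main obstacle.} The delicate point will be the interaction of the two binary operations under the distributive law $\Com \circ \Lie_{n}$: I must check that setting the bracket to zero in $qA$ is genuinely compatible with the cofree structure of $\Pois_{n}^{\ashk}$, so that the Koszul complex really does decompose as claimed and no cross-terms between the commutative and Lie parts survive. In other words, the hard part is verifying that the acyclicity of the classical commutative Koszul complex is not spoiled by the shifted-Lie cogenerators of $\Pois_{n}^{\ashk}$, which requires carefully tracking the weight and arity gradings and confirming that the differential on the associated graded is exactly the de Rham-type differential of Example~\ref{exa:koszul-complex} tensored identically across generators.
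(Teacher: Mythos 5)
Your computation of the quadratic reduction $qA = \Com(V)$ with trivial bracket matches the paper, and invoking \cite[Theorem~4.9]{Milles2012} together with the decomposition $\Pois_{n} = \Com \circ \Lie_{n}$ is the right framework. But the route you say you would actually pursue rests on a false premise: $qA = S(V)$ with zero bracket is \emph{not} a free $\Pois_{n}$-algebra. It is free as a commutative algebra, and as a $\Pois_{n}$-algebra it is the \emph{induced} (universal enveloping) algebra $\Pois_{n} \circ_{\Lie_{n}} V$ of the abelian $\Lie_{n}$-algebra $V$; the free $\Pois_{n}$-algebra on $V$ is $\Com(\Lie_{n}(V))$, which is much bigger. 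So ``free algebras over a Koszul operad are always Koszul'' does not apply --- indeed, if $qA$ were free there would be essentially nothing to prove, since a monogenic algebra with no relations is trivially Koszul, whereas $qA$ has the nontrivial relations $qS = \lambda(V)$ (all brackets of generators vanish), and these relations are exactly what makes Koszulity a real statement here.

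Your fallback --- filter the Koszul complex by polynomial degree and reduce to a tensor product of the classical complexes $\Bbbk[x] \otimes \Lambda^{c}(dx)$, one per generator --- targets the wrong elementary complex, and the ``cross-term'' issue you flag as the main obstacle is precisely where it breaks down, and it is left unresolved in your proposal. The Koszul dual is $qA^{\ashk} = \Sigma^{1-n}\bar{S}^{c}(\Sigma^{n}V)$ (cofree cocommutative, not exterior), and the paper's proof proceeds by recognizing the canonical map $\Omega_{\kappa}(qA^{\ashk}) \to qA$ as the free symmetric algebra functor $\bar{S}(-)$ applied to a \emph{single} map, namely the bar/cobar resolution $\bigl(\Sigma^{1-n}L(\Sigma^{1-n}\bar{S}^{c}(\Sigma^{n}V)), d_{2}\bigr) \to V$ of the abelian Lie algebra $V$. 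The free Lie algebra mixes all the generators, so there is no generator-by-generator tensor decomposition; the acyclicity actually needed is classical operadic Koszul duality between $\Com$ and $\Lie$, not the polynomial-ring Koszul complex of Example~\ref{exa:koszul-complex}. Once the cobar construction is recognized in this form, the conclusion follows because $\bar{S}$ preserves quasi-isomorphisms over a field of characteristic zero (K\"unneth); that recognition, absent from your proposal, is the real content of the paper's proof.
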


\begin{lemma}
  The quadratic reduction $qA$ of $A$ is a free symmetric algebra with trivial Lie bracket.
\end{lemma}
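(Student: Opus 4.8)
The plan is to compute the quadratic reduction $qA$ explicitly from the QLC presentation of $A$ and identify it with a free commutative algebra with vanishing bracket. Recall from Definition~\ref{def:quad-red} that $qA = \Pois_{n}(V) / (qS)$, where $qS$ is the projection of $S_{n;D}$ onto $E(V)$. First I would write down the three families of relations generating $S_{n;D}$, namely $\{x_{i}, x_{j}\}$, $\{\xi_{i}, \xi_{j}\}$, and $\{x_{i}, \xi_{j}\} - \delta_{ij}\uni$, and take the quadratic part of each. The first two families are already purely quadratic, so they project to themselves. The third family is quadratic-constant: its quadratic part is $\{x_{i}, \xi_{j}\}$, the constant term $\delta_{ij}\uni$ being the part that is killed by projecting onto $E(V)$. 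Hence $qS = \R\langle \{x_{i}, x_{j}\}, \{\xi_{i}, \xi_{j}\}, \{x_{i}, \xi_{j}\} \rangle$, i.e.\ the quadratic reduction forces \emph{all} brackets of generators to vanish.

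Next I would unpack what imposing $qS$ on $\Pois_{n}(V)$ does to the Poisson structure. Since $\Pois_{n} = \Com \circ \Lie_{n}$, every element of $\Pois_{n}(V)$ is built from products of iterated brackets of generators. Setting all pairwise brackets $\{v_{a}, v_{b}\}$ (for $v_{a}, v_{b}$ ranging over the $x_{i}$ and $\xi_{j}$) to zero in the operadic ideal $(qS)$, and using that $\lambda$ is a biderivation with respect to $\mu$ (the Leibniz relation built into $\Pois_{n}$), forces every bracket between arbitrary elements of $qA$ to vanish: a bracket of two monomials expands via the biderivation property into a sum of monomials each containing a bracket of two generators, all of which are zero in $qA$. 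Therefore the Lie bracket on $qA$ is identically trivial.

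Finally I would conclude that, with the bracket gone, the only surviving structure is the commutative product, and $qA$ is the free commutative algebra on $V$ subject to no further relations — that is, $qA \cong S(V) = \R[x_{1}, \dots, x_{D}, \xi_{1}, \dots, \xi_{D}]$ with zero bracket, where $S$ denotes the free graded-symmetric algebra functor. The grading is the one already fixed: $\deg x_{i} = 0$ and $\deg \xi_{j} = 1-n$. The main point requiring care is verifying that $(qS)$ contains no hidden relations among the commutative products beyond triviality of the bracket — equivalently, that the relations $qS$ involve only the bracket generator $\lambda$ and not $\mu$, so that the commutative part of $\Pois_{n}(V)$ descends freely. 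This follows because each generator of $qS$ is of the form $\lambda(v_{a}, v_{b})$ and generates, under the biderivation relation, only bracket-containing elements, leaving the symmetric-algebra part $\Com(V) = S(V)$ untouched. I expect this last verification — that no commutative relations are secretly forced — to be the one genuine obstacle, handled by the observation that $\Pois_{n}(V) \cong S(V) \otimes (\text{bracket part})$ as a vector space and $(qS)$ lives entirely in the bracket part.
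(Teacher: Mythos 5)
Your proof is correct and takes essentially the same approach as the paper: you compute $qS$ by projecting the three families of relations onto $E(V)$, find that $qS = \lambda \otimes_{\Sigma_{2}} V^{\otimes 2}$ (all brackets of generators), and conclude that $qA = \Pois_{n}(V)/(qS) = \Com(V)$ with trivial bracket. The additional verification you flag — that the ideal $(qS)$ lives entirely in the bracket part of $\Pois_{n}(V) = \Com \circ \Lie_{n}(V)$ and forces no relations among the commutative products — is precisely the content the paper's one-line proof leaves implicit, so you have simply spelled out the same argument in more detail.
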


\begin{proof}
  Let $V = \R \langle x_{1}, \dots, x_{D}, \xi_{1}, \dots, \xi_{D} \rangle$ be the generators of $A$.
  We check that $qS = \lambda(V) = \lambda \otimes_{\Sigma_{2}} V^{\otimes 2}$, i.e., in the quadratic reduction, all Lie brackets vanish.
  Therefore, $qA = \Pois_{n}(V) / (qS) = \Pois_{n}(V) / (\lambda(V)) = \Com(V)$ is a free symmetric algebra, and the Lie bracket vanishes.
\end{proof}

Let $\Com^{c}$ be the cooperad governing cocommutative coalgebras, which is the Koszul dual of the Lie operad up to suspension.
Since we are working over a field of characteristic zero, we can identify $\Com^{c}(X)$ with:
\begin{equation}
  \bar{S}^{c}(X) \coloneqq \bigoplus_{i \ge 1} (X^{\otimes i})_{\Sigma_{i}}
\end{equation}
where the coproduct is given by shuffles.
For a shorter notation we will also write $L(X)$ for the free Lie algebra on $X$, $S(X)$ for the free unital symmetric algebra, and $\bar{S}(X)$ for the free symmetric algebra without unit.

\begin{lemma}\label{lem:dual-qa}
  The Koszul dual coalgebra of $qA$ is given by:
  \begin{equation}
    qA^{\ashk} = \Sigma^{1-n} \bar{S}^{c}(\Sigma^{n} V).
  \end{equation}
\end{lemma}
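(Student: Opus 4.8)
The plan is to combine the formula of Millès recalled in Section~\ref{sec:kosz-dual-quadr} with the description of $\Pois_n^{\ashk}$ coming from the distributive law $\Pois_n = \Com \circ \Lie_n$. By the previous lemma we have $qA = \Com(V)$, and the relations are $qS = \lambda(V)$, the bracket part of $E(V)$. Millès's construction then gives $qA^{\ashk} = \Sigma\,\Pois_n^{\ashk}(V, \Sigma qS)$, the suspension of the cofree $\Pois_n^{\ashk}$-coalgebra on $V$ cut out by the corelations $\Sigma\lambda(V)$ in weight two, so the whole problem reduces to identifying this cofree-with-corelations coalgebra.

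First I would unravel $\Pois_n^{\ashk}$. Since $\Pois_n = \Com\circ\Lie_n$ is Koszul with the biderivation distributive law, its Koszul dual cooperad factors as $\Pois_n^{\ashk} = \Lie_n^{\ashk}\circ\Com^{\ashk}$ with the dual distributive law, compatibly with $\Pois_n^{\ashk} = (\mathscr{S}^{c})^{n}\Pois_n^{\vee}$. Hence $\Pois_n^{\ashk}(V) = \Lie_n^{\ashk}\bigl(\Com^{\ashk}(V)\bigr)$, and its two families of binary cogenerators are dual respectively to $\lambda$ (carried by the outer $\Lie_n^{\ashk}$, the dual of the inner bracket) and to $\mu$ (carried by the inner $\Com^{\ashk}$, the dual of the outer product). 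Under this identification the weight-two component of the cofree coalgebra splits as $\Sigma^{2}\lambda(V)\oplus\Sigma^{2}\mu(V)$, and the corelation $\Sigma qS = \Sigma^{2}\lambda(V)$ keeps the outer-$\lambda$ summand and kills the inner-$\mu$ summand.

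Next I would show that imposing this single weight-two condition collapses the whole coalgebra onto the outer factor, i.e.\ $\Pois_n^{\ashk}(V,\Sigma qS) = \Lie_n^{\ashk}(V)$, with $\Com^{\ashk}$ applied only in weight one. Because $\Pois_n$ is binary, the propagation of the corelations to higher weight is controlled at weight three, exactly as in the proof of Proposition~\ref{prop:def-koszul-dual}: an element of the cofree coalgebra lies in the sub-coalgebra iff each of its iterated binary corestrictions lands in $\Sigma^{2}\lambda(V)$, and a weight-three check shows that any nontrivial use of an inner $\Com^{\ashk}$-cogenerator produces a forbidden $\mu$-component. Thus only the outer cocommutative structure survives. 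Finally I would track the suspensions: $\Com^{c}$ is, up to suspension, the Koszul dual $\Lie^{\ashk}$ as recalled above, so together with $\Lie_n = \mathscr{S}^{1-n}\Lie$ and the rule $\mathscr{S}^{c}\CC(\Sigma V) = \Sigma\CC(V)$ I can move all suspensions onto the argument and rewrite $\Sigma\,\Lie_n^{\ashk}(V) \cong \Sigma^{1-n}\bar{S}^{c}(\Sigma^{n} V)$, which is the claimed identification; the cocommutativity of $\bar{S}^{c}$ is precisely the surviving outer structure.

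The main obstacle is the last two steps taken together: rigorously justifying the collapse onto the outer factor via the weight-three propagation argument while keeping the suspension and Koszul-sign bookkeeping consistent, since an error of one in the suspension exponents would change the statement. Matching weights one and two on the nose --- both sides have cogenerators $\Sigma V$, and $\Sigma^{2}\lambda(V) \cong \Sigma^{1-n}S^{2}(\Sigma^{n} V)$ by comparing degrees and $\Sigma_2$-symmetry types --- is the sanity check I would perform before committing to the general-weight identification.
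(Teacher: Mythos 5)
Your proposal is correct and follows essentially the same route as the paper: the paper's proof consists of citing the general fact that if $\PP = \QQ_{1} \circ \QQ_{2}$ arises from a distributive law and $A = \QQ_{1}(V)$, then $A^{\ashk} = \Sigma \QQ_{2}^{\ashk}(V)$, and then identifying $\Sigma \Lie_{n}^{\ashk}(V) \cong \Sigma^{1-n}\bar{S}^{c}(\Sigma^{n}V)$ via $\Com$/$\Lie$ Koszul duality --- exactly your outline. The only difference is that you additionally sketch a proof of that general fact in this particular case (via the dual factorization $\Pois_{n}^{\ashk} = \Lie_{n}^{\ashk} \circ \Com^{\ashk}$ and the weight-two/weight-three collapse argument), whereas the paper simply invokes it.
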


\begin{proof}
  Recall that if $\PP = \QQ_{1} \circ \QQ_{2}$ is obtained by a distributive law between two finitely-generated binary quadratic operads $\QQ_{i} = \Free(E_{i})/(R_{i})$ ($i = 1,2$), then $\PP^! = \QQ_2^! \circ \QQ_1^!$ with the transpose distributive law~\cite[Proposition 8.6.7]{LodayVallette2012}.
  If $A = \QQ_1(V) = \PP(V) / (E_2(V))$, it follows that $A^! = \PP^!(V^*) / (E_2(V)^\perp) = (\QQ_2^! \circ \QQ_1^!)(V^*) / (E_1^\vee(V^\vee)) \cong \QQ_2^!(V^\vee)$ is simply given by the free $\QQ_2^!$-algebra with a trivial action of the generators of $\QQ_1$.
  By dualizing this statement, we thus find that $A^{\ashk} = \Sigma \QQ_{2}^{\ashk}(V)$.

  Applied to our case, we obtain that the Koszul dual coalgebra of $qA = \Com(V)$ is $qA^{\ashk} = \Sigma(\Lie_{n})^{\ashk}(V)$.
  Thanks to the Koszul duality between $\Com$ and $\Lie$, this is identified with $\Sigma^{1-n} \Com^{c}(\Sigma^{n} V) = \Sigma^{1-n}\bar{S}^c(\Sigma^n V)$.
\end{proof}

\begin{proof}[Proof of Proposition~\ref{prop:a-koszul}]
  Let $\kappa : \Pois_{n}^{\ashk} \to \Pois_{n}$ be the twisting morphism of Koszul duality.
  Then the cobar construction of $qA^{\ashk}$ is given by:
  \begin{equation}
    \Omega_{\kappa} qA^{\ashk} = \bigl( \underbrace{\bar{S}(\Sigma^{1-n} L(\Sigma^{n-1}}_{= \Pois_{n}} \Sigma^{-1} \underbrace{\Sigma^{1-n}\bar{S}^{c}(\Sigma^{n} V))}_{= qA^{\ashk}}), d_{2} \bigr).
  \end{equation}
  Here $d_{2}$ is the derivation of $\Pois_{n}$-algebras whose restriction on $\Sigma^{-n} \bar{S}^{c}(\Sigma^{n} V)$ is given by (forgetting about suspension for ease of notation):
  \begin{equation}
    d_{2}|_{\Sigma^{-1}qA^{\ashk}}(u) = \sum_{(u)} \frac{1}{2} [u_{(1)}, u_{(2)}],
  \end{equation}
  where the bracket is the bracket of the free Lie algebra appearing in $\Omega_\kappa qA^\ashk$.
  This $1/2$-factor is due to the identification $\Com^{c}(X) \cong \bar{S}^{c}(X)$, the first being defined using invariants and the second using coinvariants (under the symmetric groups actions).

  The twisting morphism $\varkappa \in \Tw_{\kappa}(qA^{\ashk}, qA)$ is given by $\varkappa(\Sigma x_{i}) = x_{i}$ and $\varkappa(\Sigma \xi_{i}) = \xi_{i}$ on terms of weight one, and it vanishes on terms of weight $\ge 2$.
  This twisting induces a morphism $\Omega_{\kappa}(qA^{\ashk}) \to qA$.
  We easily see that this morphism is the image under $S$ of the bar-cobar resolution of the abelian $\Lie_{n}$ algebra $V$:
  \begin{equation}
    \bigl( \Sigma^{1-n} L(\Sigma^{-1} \bar{S}^{c}(\Sigma^{n} V)), d_{2} \bigr) \xrightarrow{\sim} V,
  \end{equation}
  which is indeed a quasi-isomorphism thanks to the Koszul property of $\Lie_n$.
  The functor $S$ preserves quasi-isomorphisms as we are working over a field of characteristic zero.
  Therefore we obtain that $\Omega_{\kappa}(qA^{\ashk}) \to qA$ is a quasi-isomorphism, thus $qA$ is Koszul, and therefore by definition $A$ is Koszul.
\end{proof}

We then obtain a small resolution of the $u\Pois_{n}$-algebra $A$ using the cobar construction of its Koszul dual coalgebra that we now describe.
The map $\alpha : qS \to \Bbbk \uni \oplus V$ from Equation~\eqref{eq:alpha} is given by $\alpha_{1} = 0$, $\alpha_{0}(\{x_{i}, \xi_{i}\}) = - \uni$ for all $i$, and $\alpha_{0} = 0$ on all other basis elements.
Let us write as a shorthand:
\begin{equation}
  v_{1} \vee \dots \vee v_{k} \coloneqq \frac{1}{k!} \sum_{\sigma \in \Sigma_{k}} v_{\sigma(1)} \otimes \dots \otimes v_{\sigma(k)} \in \Sigma^{1-n} \bar{S}^{c}(\Sigma^{n} V).
\end{equation}
Then the Koszul dual $A^{\ashk} = (qA^{\ashk}, d_{A^{\ashk}}, \theta_{A^{\ashk}})$ is such that $d_{A^{\ashk}} = 0$, and
\begin{align*}
  \theta  : \Sigma^{1-n} \bar{S}^{c}(\Sigma^{n} V) & \to \Bbbk \uni                           \\
  x_{i} \vee \xi_{i}                               & \mapsto - \uni, \quad \text{ for all } i \\
  \text{other basis elements}                      & \mapsto 0.
\end{align*}

We then obtain
\begin{equation}
  \label{eq:cobar-a}
  \Omega_{\kappa} A^{\ashk} = (S(\Sigma^{1-n}L(\Sigma^{-1} \bar{S}^{c}(\Sigma^{n} V))), d_{0} + d_{2}) \xrightarrow{\sim} A,
\end{equation}
where (by abuse of notation) we still denote by $d_{2}$ the Chevalley--Eilenberg differential from before, satisfying $d_{2}(\uni) = 0$.
The derivation $d_{0}$ is the one whose restriction to $\Sigma^{-1} A^{\ashk}$ is given by $\Sigma \theta$:
\begin{equation}
  d_{0}|_{\Sigma^{-1} qA^{\ashk}} : \Sigma^{-n} \bar{S}^{c}(\Sigma^{n} V) \xrightarrow{\Sigma \theta} \Bbbk \uni \hookrightarrow u\Pois_{n}(\Sigma^{-1} qA^{\ashk}).
\end{equation}

\begin{remark}
  \label{rmk:big}
  Let us compare $\Omega_{\kappa}A^{\ashk}$ to the resolution that would be obtained if one applied curved Koszul duality at the level of operads (see Section~\ref{sec:curv-kosz-dual}) with the bar/cobar resolution from the theory of~\cite{HirshMilles2012}.
  Without the suspensions, our resolution is just $SL\bar{S}^{c}(V)$, i.e., it is the free symmetric algebra on the free Lie algebra on the cofree symmetric coalgebra on $V$.

  The resolution that would be obtained from~\cite{HirshMilles2012} would be much bigger (although it can be made explicit).
  Indeed, the quadratic reduction of $u\Pois_{n}$ is not just $\Pois_{n}$, it is in fact the direct sum $\Pois_{n} \oplus \uni$.
  It follows from~\cite[Proposition~6.1.4]{HirshMilles2012}) that $(qu\Pois_{n})^{\ashk}(r)$ is spanned by elements of the type $\bar{\alpha}_{S}$, where $\alpha \in \Pois_{n}(k)$, $S \subset \{ 1, \dots, k\}$ and $r = k - \#S$.
  Roughly speaking, $S$ represents inputs of $\alpha$ that have been ``plugged'' by the counit $\uni$.
  The bar construction of $A$ is then the cofree $(qu\Pois_{n})^{\ashk}$-coalgebra on $A = S(V)$ (plus some differential and we forget about suspensions).
  Then the bar-cobar resolution of $A$ is the free $u\Pois_{n}$-algebra on this bar construction.
  It contains as a subspace $SL\bar{S}^{c}L^{c}(A) = SL\bar{S}^{c}L^{c}S(V)$, which is already quite bigger than $\Omega_{\kappa}A^{\ashk}$, and then we also need to add all operations where inputs have been plugged in by the unit.

  The difference can roughly speaking be explained as follows.
  The bar-cobar resolution of~\cite{HirshMilles2012} knows nothing about the specifics of the algebra $A$, thus it must resolve everything in $A$: the Lie bracket, the symmetric product, and the relations involving the unit.
  This has the advantage of being a general procedure that is independent of $A$ (and functorial).
  But with our specific $A$, we may find a smaller resolution: we know that the product of $A$ has no relations, and the unit is not involved in nontrivial relations (a consequence of the QLC condition), hence they do not need to be resolved.
\end{remark}

\subsection{Derived enveloping algebras}
\label{sec:enveloping-algebras}

\subsubsection{General constructions}
\label{sec:general-construction}

Given an operad $\PP$ and a $\PP$-algebra $A$, the enveloping algebra $\U_{\PP}(A)$ is a unital associative algebra such that the left modules of $\U_{\PP}(A)$ are precisely the operadic left modules of $A$ (see e.g.~\cite[Section~4.3]{Fresse2009}).
Let $\PP[1]$ be the operadic right $\PP$-module given by $\PP[1] = \{ \PP(n+1) \}_{n \ge 0}$.
Then the enveloping algebra $\U_\PP(A)$ can be obtained as the relative composition product:
\begin{equation}
  \U_{\PP}(A) \cong \PP[1] \circ_{\PP} A = \operatorname{coeq} \bigl( \PP[1] \circ \PP \circ A \rightrightarrows \PP[1] \circ A \bigr).
\end{equation}

\begin{example}
  \label{exa:univ-lie}
  The enveloping algebra $\U_{\Lie}(\mathfrak{g})$ of a Lie algebra $\mathfrak{g}$ is the usual universal enveloping algebra $\U(\mathfrak{g})$ of $\mathfrak{g}$.
  We view it as a free associative algebra on symbols $X_{f}$, for $f \in \mathfrak{g}$, subject to the relations $X_{[f,g]} = X_{f} X_{g} - (-1)^{|g| \cdot |f|} X_{g} X_{f}$.
  The universal enveloping algebra $\U_{c\Lie}(\mathfrak{g})$ of a Lie algebra equipped with a central $\uni \in \mathfrak{g}$ is the quotient $\U(\mathfrak{g}) / (X_{\uni})$.
\end{example}

\begin{example}
  \label{exa:univ-com}
  The enveloping algebra $\U_{\Com}(B)$ of a commutative algebra $B$ is $B_{+} = \Bbbk 1 \oplus B$, where $1$ is an extra unit.
  The enveloping algebra $\U_{u\Com}(B)$ of a unital commutative algebra $B$ is $B$ itself (strictly speaking, the quotient of $\Bbbk 1 \oplus B$ by the relation $1 - 1_{B}$).
\end{example}

Suppose now that $\PP$ is any operad (potentially unital, e.g., we could take $\PP = u\Pois_{n}$) and let $A$ be a $\PP$-algebra.
Let $\PP_{\infty} \xrightarrow{\sim} \PP$ be a cofibrant resolution of $\PP$.
The given morphism $\PP_{\infty} \xrightarrow{\sim} \PP$ induces a Quillen equivalence between the semi-model categories of $\PP$- and $\PP_{\infty}$-algebras.
The right adjoint allows us to view $A$ as a $\PP_{\infty}$-algebra.

\begin{proposition}
  \label{prop:derived-u}
  Let $R_{\infty} \xrightarrow{\sim} A$ be a cofibrant resolution of $A$ as a $\PP_{\infty}$-algebra, and let $R \coloneqq \PP \circ_{\PP_{\infty}} R_{\infty}$.
  Then there is an equivalence
  \begin{equation}
    \U_{\PP_{\infty}}(A) \simeq \U_{\PP}(R).
  \end{equation}
\end{proposition}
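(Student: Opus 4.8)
The plan is to compute the derived enveloping algebra directly from the cofibrant resolution $R_{\infty}$ and then transport the computation along the weak equivalence $\PP_{\infty} \to \PP$, using the associativity of relative composition products. By definition, the derived enveloping algebra of $A$ is modeled by the ordinary enveloping algebra of the cofibrant resolution $R_{\infty}$:
\[
  \U_{\PP_{\infty}}(A) \simeq \U_{\PP_{\infty}}(R_{\infty}) = \PP_{\infty}[1] \circ_{\PP_{\infty}} R_{\infty}.
\]
On the other side, since $R = \PP \circ_{\PP_{\infty}} R_{\infty}$ is the image of the cofibrant object $R_{\infty}$ under the left Quillen functor $\PP \circ_{\PP_{\infty}} (-)$ induced by $\PP_{\infty} \to \PP$, it is itself cofibrant as a $\PP$-algebra, so $\U_{\PP}(R) = \PP[1] \circ_{\PP} R$ requires no further derivation.

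First I would unfold $\U_{\PP}(R)$ and exploit associativity of the relative composition product. Regarding $\PP$ as a $(\PP, \PP_{\infty})$-bimodule (a left module over itself and a right $\PP_{\infty}$-module by restriction along $\PP_{\infty} \to \PP$), associativity gives
\[
  \U_{\PP}(R) = \PP[1] \circ_{\PP} (\PP \circ_{\PP_{\infty}} R_{\infty}) \cong (\PP[1] \circ_{\PP} \PP) \circ_{\PP_{\infty}} R_{\infty}.
\]
The unit axiom for the relative composition product then yields $\PP[1] \circ_{\PP} \PP \cong \PP[1]$ as right $\PP_{\infty}$-modules, where $\PP[1]$ carries the $\PP_{\infty}$-action obtained by restriction. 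Hence $\U_{\PP}(R) \cong \PP[1] \circ_{\PP_{\infty}} R_{\infty}$, and it remains only to compare this with $\PP_{\infty}[1] \circ_{\PP_{\infty}} R_{\infty}$.

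The final step is to produce a weak equivalence $\PP_{\infty}[1] \circ_{\PP_{\infty}} R_{\infty} \xrightarrow{\sim} \PP[1] \circ_{\PP_{\infty}} R_{\infty}$. Since $\PP_{\infty} \to \PP$ is an arity-wise quasi-isomorphism, its reindexing $\PP_{\infty}[1] \to \PP[1]$ is a quasi-isomorphism of right $\PP_{\infty}$-modules, and I would apply $(-) \circ_{\PP_{\infty}} R_{\infty}$ to it. The hard part is precisely here: the relative composition product does not preserve arbitrary weak equivalences, so one must invoke a preservation theorem, and this is where the hypotheses enter. Because $R_{\infty}$ is cofibrant as a $\PP_{\infty}$-algebra and we work over a field of characteristic zero (so every operad is automatically $\Sigma_{*}$-cofibrant), the relevant results of Fresse~\cite{Fresse2009} guarantee that $(-) \circ_{\PP_{\infty}} R_{\infty}$ sends quasi-isomorphisms of right $\PP_{\infty}$-modules to quasi-isomorphisms. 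Chaining this with the identification of the previous paragraph gives
\[
  \U_{\PP_{\infty}}(A) \simeq \PP_{\infty}[1] \circ_{\PP_{\infty}} R_{\infty} \xrightarrow{\sim} \PP[1] \circ_{\PP_{\infty}} R_{\infty} \cong \U_{\PP}(R),
\]
which is the desired equivalence.
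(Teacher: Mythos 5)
Your proof is correct and takes essentially the same route as the paper: the same zig-zag $\U_{\PP_{\infty}}(A) \simeq \PP_{\infty}[1] \circ_{\PP_{\infty}} R_{\infty} \xrightarrow{\sim} \PP[1] \circ_{\PP_{\infty}} R_{\infty} \cong \U_{\PP}(R)$, resting on homotopy invariance of the relative composition product against the cofibrant algebra $R_{\infty}$ together with the cancellation rule $X \circ_{\PP}(\PP \circ_{\PP_{\infty}} Y) \cong X \circ_{\PP_{\infty}} Y$. The only cosmetic differences are that the paper justifies the replacement of $A$ by $R_{\infty}$ via Fresse's Theorem~17.4.B(b) (homotopy invariance of $\U_{\PP_{\infty}}$ over the cofibrant operad) rather than treating it as the definition of the derived enveloping algebra, and cites the cancellation rule (Fresse, Theorem~7.2.2) rather than rederiving it from associativity and unitality of $\circ_{\PP}$.
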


\begin{proof}
  The proposition follows from the following diagram:
  \begin{equation}
    \begin{tikzcd}
      \U_{\PP_{\infty}}(A) \cong \PP_{\infty}[1]
      \circ_{\PP_{\infty}} A \ar[r, leftarrow, "\sim"] &
      \PP_{\infty}[1] \circ_{\PP_{\infty}} R_{\infty} \ar[d,
        "\sim"] \\
      \PP[1] \circ_{\PP} R \cong \U(R) & \PP[1] \circ_{\PP_{\infty}} R_{\infty} \ar[l, "\cong" swap]
    \end{tikzcd}
    .
  \end{equation}
  The upper horizontal equivalence follows from~\cite[Theorem~17.4.B(b)]{Fresse2009}, and the right vertical one follows from~\cite[Theorem~17.4.A(a)]{Fresse2009}.
  Finally, the bottom horizontal isomorphism follows from the cancellation rule $X \circ_{\PP} (\PP \circ_{\QQ} Y) \cong X \circ_{\QQ} Y$~\cite[Theorem~7.2.2]{Fresse2009}.
\end{proof}

\subsubsection{Poisson case}
\label{sec:poisson-case}

We now consider the symplectic $u\Pois_{n}$-algebra $A = (\R[x_{i}, \xi_{j}], \{\})$ from Definition~\ref{def:sympl-poisson}.
We have the following proposition.

\begin{proposition}\label{prop:cobar-infinity}
  The derived enveloping algebra $\U_{(u\Pois_{n})_{\infty}}(A)$ is quasi-isomorphic to $\U(\Omega_{\kappa}A^{\ashk})$, where $\Omega_{\kappa}A^{\ashk}$ is the cobar construction described in Section~\ref{sec:kosz-prop-expl}.
\end{proposition}

\begin{proof}
  This follows from Proposition~\ref{prop:derived-u}.
  In order to apply that proposition, we need to resolve $A$ as a $(u\Pois_n)_{\infty}$-algebra and push forward that resolution to a $u\Pois_n$-algebra.
  In Definition~\ref{def:cobar}, instead of taking the free $u\PP$-algebra, we can take the free $(u\PP)_{\infty}$-algebra to define the cobar construction and obtain a resolution $R_\infty \to A$ which is cofibrant as a $(u\Pois_n)_\infty$-algebra (using the same result as the end of the proof of Theorem~\ref{thm:main}).
  Since the differentials in the cobar construction only involve generating operations from $u\Pois_n$, we find that $R = u\Pois_n \circ_{(u\Pois_n)_\infty} R_\infty$ is isomorphic to $\Omega_\kappa A^{\ashk}$.
\end{proof}

Both $A$ and $\Omega_{\kappa}A^{\ashk}$ are obtained by considering the relative composition product
\begin{equation}
  S(\Sigma^{1-n} \mathfrak{g}) \coloneqq u\Pois_{n} \circ_{c\Lie_{n}} \Sigma^{1-n} \mathfrak{g},
\end{equation}
where $\mathfrak{g}$ is some $c\Lie$-algebra, and we consider the embedding $c\Lie_{n} \hookrightarrow u\Pois_{n}$.
In other words, $A$ and $\Omega_{\kappa}A^{\ashk}$ are free as symmetric algebras on a given Lie algebra, with a central element identified with the unit of the symmetric algebra.
The differential and the bracket are both extended from the differential and bracket of $\mathfrak{g}$ as (bi)derivations.
Recall from Examples~\ref{exa:univ-lie} and~\ref{exa:univ-com} the descriptions of the enveloping algebras of Lie algebras and commutative algebras.

\begin{proposition}[Explicit description found in~{\cite[Section~1.1.4]{Fresse2006}}]
  \label{prop:descr-u-pois}
  Let $\mathfrak{g}$ be a $c\Lie$-algebra and $B = S(\Sigma^{1-n} \mathfrak{g})$ the induced $u\Pois_{n}$-algebra.
  Then there is an isomorphism of graded modules:
  \begin{equation}
    \U_{u\Pois_{n}}(B) \cong B \otimes \U_{c\Lie_{n}}(\Sigma^{1-n} \mathfrak{g}).
  \end{equation}
  The algebra $\U_{c\Lie_{n}}(\Sigma^{1-n} \mathfrak{g})$ is generated by symbols $X_{f}$ for $f \in \mathfrak{g}$, with $\deg X_f = \deg f$.
  We have the following relations in $\U_{u\Pois_n}(B)$ (where $f,g \in \mathfrak{g}$, whose suspensions belong to $B \subset \U_{u\Pois_n}(B)$):
  \begin{equation}
    \label{eq:rel-upois}
    \begin{aligned}
      X_{\uni}
       & = 0,
      \\
      X_{fg}
       & = (\Sigma^{1-n}f) \cdot X_{g} + (-1)^{(|f|+1-n) \cdot (|g|+1-n)} (\Sigma^{1-n} g) \cdot X_{f},
      \\
      X_{f} \cdot (\Sigma^{1-n} g)
       & = (\Sigma^{1-n} \{f,g\}) + (-1)^{|f| \cdot (|g|+1-n)} (\Sigma^{1-n}g) \cdot X_{f},
      \\
      X_{\{f,g\}}
       & = X_{f} \cdot X_{g} - (-1)^{|f| \cdot |g|} X_{g} \cdot X_{f}.
    \end{aligned}
  \end{equation}
  In particular, elements of $B$ and $\U_{c\Lie_n}(\Sigma^{1-n}\mathfrak{g})$ do not necessarily commute.
  The differential is the sum of the differential of $B$ and the differential given by $d X_{f} \coloneqq X_{df}$, where $df \in B = S(\Sigma^{1-n} \mathfrak{g})$ and we use the relations to get back to $B \otimes \U(\Sigma^{1-n} \mathfrak{g})$.
\end{proposition}

As explained in~{\cite[Section~1.1.4]{Fresse2006}}, if $M$ is an $u\Pois_n$-module over $B = S(\Sigma^{1-n} \mathfrak{g})$ then $\U_{u\Pois_n}(B)$ acts on $M$ in the following way: an element $b \in B$ act by multiplication, while the element $X_f$ acts by $[\Sigma^{1-n}f, -]$.
The relations above simply encode the Jacobi and Leibniz identities.

\begin{proof}
  The extension of the result from~{\cite[Section~1.1.4]{Fresse2006}} to the unital case is immediate.
\end{proof}

\begin{proposition}
  The derived enveloping algebra and the classical enveloping algebra of the symplectic Poisson $n$-algebra $A_{n;D}$ are quasi-isomorphic.
\end{proposition}
\begin{proof}
  Letting $A = A_{n; D}$, our aim is to prove that $\U_{(u\Pois_{n})_{\infty}}(A)$ and $\U_{u\Pois_{n}}(A)$ are quasi-isomorphic.

  We use the cobar resolution $\Omega_{\kappa}A^{\ashk}$ and the result of Proposition~\ref{prop:derived-u} to obtain that this derived enveloping algebra is quasi-isomorphic to $\U_{u\Pois_{n}}(\Omega_{\kappa}A^{\ashk})$.
  From the description of Proposition~\ref{prop:descr-u-pois}, as a dg-module, this is isomorphic to
  \begin{equation}
    \U_{u\Pois_{n}}(\Omega_{\kappa}A^{\ashk}) \cong \bigl( \Omega_{\kappa}A^{\ashk} \otimes \U_{c\Lie_{n}}(c\Lie_{n}(\Sigma^{-1} \bar{S}^{c}(\Sigma^{n} V))), d_\Omega + d' \bigr),
  \end{equation}
  where $V = \R \langle x_{i}, \xi_{j} \rangle$ is the graded module of generators.
  The product and the generators $X_f \in \U_{c\Lie_n}(c\Lie_{n}(\Sigma^{-1} \bar{S}^{c}(\Sigma^{n} V)))$ of are defined in Equation~\eqref{eq:rel-upois}.
  The differential $d'$ is defined on a generator $X_f$ by $d'(X_f) = X_{df}$ (where we use the relations to get back to $\Omega_\kappa A^{\ashk} \otimes \U_{c\Lie_n}(\dots)$).
  We have a chain map, where $d''$ is defined similarly to $d'$ (but we apply the quotient map $\Omega_\kappa A^{\ashk} \to A$ to the first factor):
  \begin{equation}\label{eq:reduced-complex}
    \U_{u\Pois_{n}}(\Omega_{\kappa}A^{\ashk}) \to \bigl( A \otimes \U_{c\Lie_{n}}(c\Lie_{n}(\Sigma^{-1} \bar{S}^{c}(\Sigma^{n} V))), d'' \bigr)
  \end{equation}
  Let us describe this differential $d''$ explicitly.
  Let $X_{f}$ be a generator of the universal enveloping algebra, for some $f \in \bar{S}^{c}(\Sigma^{n} V)$.
  Then $d''X_{f} = X_{df} = X_{d_{0}f} + X_{d_{2}f}$, where $d_{0}$ and $d_{2}$ were explicitly described in Section~\ref{sec:kosz-prop-expl}.
  In both complexes ($\U_{u\Pois_n}(\Omega_{\kappa}A^{\ashk})$ and the one at the target of \eqref{eq:reduced-complex}), we can filter by the degree of the $\U_{c\Lie_n}$ factor.
  On the first page of the associated spectral sequence, only the $d_\Omega$ differential of the first complex remains, while the differential of the second one vanishes.
  Since $\Omega_{\kappa}A^{\ashk} \to A$ is a quasi-isomorphism, we find that the map~\eqref{eq:reduced-complex} is a quasi-isomorphism.

  Since $d_{0}f$ is a multiple of the unit and $X_{\uni} = \{ \uni, - \} = 0$, we obtain that $X_{d_{0} f} = 0$.
  On the other hand,
  \begin{equation}
    X_{d_{2}f} = \frac{1}{2} \sum_{(f)} (X_{f_{(1)}} X_{f_{(2)}} - (-1)^{|f_{(1)}| \cdot |f_{(2)}|} X_{f_{(2)}} X_{f_{(1)}}),
  \end{equation}
  where we use the shuffle coproduct of $\bar{S}^{c}(X)$.
  Thus we see that the differential stays inside the universal enveloping algebra, and is precisely the one of the bar/cobar resolution of the abelian $c\Lie_{n}$ algebra $V_{+} = V \oplus \R \uni$.
  Thanks to Lemma~\ref{lem:missing} below, we know that $\U_{c\Lie_{n}}$ preserves quasi-isomorphisms (the unit is freely adjoined and hence is not a boundary), and the universal enveloping algebra of an abelian Lie algebra is just a symmetric algebra, hence:
  \begin{equation}
    \U_{u\Pois_{n}}(\Omega_{\kappa}A^{\ashk}) \xrightarrow{\sim} A \otimes \U_{c\Lie_{n}}(V_{+}) \cong A \otimes S(\Sigma^{n-1} V).
  \end{equation}
  This last algebra is simply $\U_{u\Pois_{n}}(A)$, as claimed.
\end{proof}

We now state the missing lemma in the previous proof.
For simplicity we state it for unshifted Lie algebra; the $c\Lie_{n}$ case is identical.
The functor $\U_{\Lie} = \U$ preserves quasi-isomorphisms: we can filter it by tensor powers and apply Künneth's theorem as we are working over a field.
We thus get the following result on $\U_{c\Lie}(-) = \U_{\Lie}(-) / (X_{\uni})$:

\begin{lemma}
  \label{lem:missing}
  The universal enveloping functor $U_{c\Lie}$ preserves quasi-isomorphisms.
\end{lemma}
\begin{proof}
  Let $f : \mathfrak{g} \to \mathfrak{h}$ be a quasi-isomorphism of $c\Lie$-algebras.
  The associative algebra $\U_{c\Lie}(\mathfrak{g})$ is given by the presentation:
  \begin{equation}
    \label{eq:5}
    \U_{c\Lie}(\mathfrak{g}) = T(\mathfrak{g}) / (x \otimes y - (-1)^{|x| \cdot |y|} y \otimes x = [x,y], \uni = 0),
  \end{equation}
  where $T(-)$ is the tensor algebra.
  This algebra is isomorphic to $\U_{\Lie}(\mathfrak{g} / \uni)$.
  Given our quasi-isomorphism $f$, we can apply the five lemma to the following diagram:
  \begin{equation}
    \begin{tikzcd}
      0 \ar[r] & \uni \ar[r, hook] \ar[d, "\cong"] & \mathfrak{g} \ar[r, two heads] \ar[d, "f", "\sim" swap] & \mathfrak{g}/\uni \ar[r] \ar[d, "\bar{f}"] & 0 \\
      0 \ar[r] & \uni \ar[r, hook] & {\mathfrak{h}} \ar[r, two heads] & {\mathfrak{h}/\uni} \ar[r] & 0
    \end{tikzcd}
  \end{equation}
  to obtain that $\bar{f} : \mathfrak{g}/\uni \to \mathfrak{h}/\uni$ is a quasi-isomorphism.
  Since $\U_{\Lie}$ preserves quasi-isomorphisms by the argument above, the result is established.
\end{proof}

\subsection{Factorization homology}
\label{sec:fact-homol-1}

As a second application, let us compute the factorization homology of a fixed parallelized simply connected closed manifold $M$ of dimension $n \geq 4$, with coefficients in $A$ (over $\R$).

\subsubsection{Right \texorpdfstring{$u\Com$}{uCom}-module structure on \texorpdfstring{$\GG_P^\vee$}{G\_P}}
\label{sec:right-ucom-module}

Let $P$ be Poincaré duality model of $M$ (see Section~\ref{sec:fact-homol}) with augmentation $\varepsilon : P^n \to \R$.
Recall from Section~\ref{sec:fact-homol} (or~\cite[Section~5]{Idrissi2016}) that we may use our explicit real model $\GG_{P}^{\vee}$ in order to compute factorization homology $\int_M A$.
The object $\GG_{P}^{\vee}$ is a right $u\Pois_{n}$-module.
As $u\Pois_n = u\Com \circ \Lie_n$, describing the $u\Pois_n$-module structure amounts to describing the $\Lie_n$-module structure and the $u\Com$-module structure.
We are going to describe these two structure separately.

Let us first describe the right $\Lie_n$-module structure of $\GG_P^\vee$, which is found in \cite{Idrissi2016}.
As a right $\Lie_{n}$-module, we have an isomorphism:
\begin{equation}
  \GG_{P}^{\vee} \cong_{\Lie_{n}} C_{*}^{CE}(P^{-*} \otimes \Sigma^{n-1} \Lie_{n}),
\end{equation}
where $C_{*}^{CE}$ is the Chevalley--Eilenberg chain complex, and $\Sigma^{n-1} \Lie_{n} = \{ \Sigma^{n-1} \Lie_{n}(k) \}_{k \ge 0}$ is a Lie algebra in the category of right $\Lie_{n}$-modules.

Let us now describe the right $u\Com$-module structure of $\GG_P^\vee$.
This module structure is not described in~\cite[Section 5]{Idrissi2016}, but it easily follows from the arguments there.
Roughly speaking, one needs to use the distributive law $\Lie_{n} \circ u\Com \to u\Com \circ \Lie_{n}$, which states that the bracket is a biderivation with respect to the product, and that the unit is a central element for the bracket.
Then we either need to use $\varepsilon : P^{n} \to \R$, to describe the action of the unit of $u\Com$, or the coproduct $\Delta : P^{n-*} \to (P^{n-*})^{\otimes 2}$ which is the Poincaré dual of the product $P \otimes P \to P$, to describe the action of the product of $u\Com$.
This coproduct is the unique linear map such that $(\varepsilon_A \otimes \varepsilon_A)(\Delta(a) \cdot (x \otimes y)) = \varepsilon_A(axy)$ for all $a,x,y \in P$.

In more detail, given $k \ge 0$, we have an isomorphism of graded modules:
\begin{equation}
  \GG_{P}^{\vee}(k) \cong \bigoplus_{r \ge 0} \biggl( \bigoplus_{\pi \in \operatorname{Part}_{r}(k)} (A^{n-*})^{\otimes r} \otimes \Lie_{n}(\# \pi_{1}) \otimes \dots \otimes \Lie_{n}(\# \pi_{r}) \biggr)^{\Sigma_{r}},
\end{equation}
where the inner sum runs over all partitions $\pi = \{ \pi_{1}, \dots, \pi_{r} \}$ of $\{1, \dots, k\}$.
To describe the right $u\Com$-module structure, we need to say what happens when we insert the two generators, the unit $\uni$ and the product $\mu$, at each index $1 \le i \le k$, for each summand of the decomposition.

Suppose we are given an element $X = (x_{j})_{j=1}^{r} \otimes \lambda_{1} \otimes \dots \otimes \lambda_{r}$, where $x_{j} \in A$ and $\lambda_{j} \in \Lie_{n}(\# \pi_{j})$.
Suppose that $i \in \pi_{j}$ in the partition.
Then:
\begin{itemize}
  \item $X \circ_{i} \uni$ is obtained by inserting the unit in $\lambda_{j}$:
        \begin{itemize}
          \item if $\lambda_{j}$ has at least one bracket then the result is zero;
          \item otherwise, if $\lambda_{j} = \id$, then the corresponding factor disappears ($c\Lie_{n}(0) = \R$) and we apply $\varepsilon$ to $x_{j}$;
        \end{itemize}
  \item $X \circ_{i} \mu$ is obtained by inserting the product $\mu$ in $\lambda_{j}$.
        Using the distributive law for $\Com$ and $\Lie_{n}$, we obtain a sum of products of two elements from $\Lie_{n}$, splitting $\pi_{j}$ in two subsets.
        We then apply the coproduct $\Delta : P^{n-*} \to (P^{n-*})^{\otimes 2}$, which is Poincaré dual to the product of $P$, to $x_{j}$ to obtain a tensor in $A \otimes A$, which we assign to the two subsets of $\pi_{j}$ in the corresponding summand.
\end{itemize}

\begin{example}
  Given $x \in P$, we can view $x \otimes \id$ as an element of $\GG^\vee_{P}(1) = P^{n-*} \otimes \Lie_{n}(1)$.
  Notice that $\GG_P^\vee(0) = \R$, and $\GG_P^\vee(2) = P^{n-*} \otimes \Lie_n(2) \oplus \bigl( (P^{n-*})^{\otimes 2} \otimes \Lie_n(1)^{\otimes 2} \bigr)^{\Sigma_2}$.
  We then have the following relations:
  \begin{align}
    (x \otimes \id) \circ_{1} \lambda
     & = x \otimes \lambda
    \in P^{n-*} \otimes \Lie_{n}(2).
    \\
    (x \otimes \id) \circ_{1} \mu
     & = \Delta(x) \otimes \id \otimes \id
    \in \bigl( (P^{n-*})^{\otimes 2} \otimes \Lie_{n}(1)^{\otimes 2} \bigr)^{\Sigma_{2}}.
    \\
    (x \otimes \id) \circ_{1} \uni
     & = \varepsilon(x)
    \in \R.
  \end{align}
\end{example}

\subsubsection{Computation}
\label{sec:computation}

\begin{lemma}
  \label{lem:CE}
  The underived relative composition product $\GG_{P}^{\vee} \circ_{u\Pois_{n}} A$ is given by the \emph{unital} Chevalley--Eilenberg homology of the $c\Lie$-algebra $P^{-*} \otimes \Sigma^{n-1} V$.
\end{lemma}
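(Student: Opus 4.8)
The plan is to reduce the two-sided bar-type computation to a single Chevalley--Eilenberg complex by a base-change argument, and then to read off the resulting $c\Lie$-structure from the explicit arity-$0$ operations of Section~\ref{sec:right-ucom-module}. First I would record that $A$ is an \emph{induced} algebra: as explained at the beginning of Section~\ref{sec:poisson-case}, one has $A = S(\Sigma^{1-n}\mathfrak{h}) = u\Pois_{n}\circ_{c\Lie_{n}}\Sigma^{1-n}\mathfrak{h}$, where $\mathfrak{h} = \Sigma^{n-1}(V\oplus\Bbbk\uni)$ is the Heisenberg $c\Lie$-algebra (bracket $\{x_{i},\xi_{j}\} = \delta_{ij}\uni$, with $\uni$ central), the identification using the embedding $c\Lie_{n}\hookrightarrow u\Pois_{n}$ and the distributive law $u\Pois_{n} = \Com\circ\Lie_{n}$. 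Restricting the right $u\Pois_{n}$-module structure of $\GG_{P}^{\vee}$ along this embedding and applying the cancellation rule $X\circ_{\PP}(\PP\circ_{\QQ}Y)\cong X\circ_{\QQ}Y$ of~\cite[Theorem~7.2.2]{Fresse2009} yields
\begin{equation*}
  \GG_{P}^{\vee}\circ_{u\Pois_{n}}A \;\cong\; \GG_{P}^{\vee}\circ_{c\Lie_{n}}\Sigma^{1-n}\mathfrak{h}.
\end{equation*}

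Next I would invoke the right $\Lie_{n}$-module identification $\GG_{P}^{\vee}\cong C_{*}^{CE}(P^{-*}\otimes\LL_{n}) = \bar{S}^{c}\bigl(\Sigma(P^{-*}\otimes\LL_{n})\bigr)$, together with the fact that over a field of characteristic zero the functor $\bar{S}^{c}$ commutes with the relative composition product: the latter is a reflexive coequalizer, and taking coinvariants is exact over $\Q$, so $\bar{S}^{c}$ preserves it. Since the CE differential is natural, this gives
\begin{equation*}
  \GG_{P}^{\vee}\circ_{c\Lie_{n}}\Sigma^{1-n}\mathfrak{h} \;\cong\; C_{*}^{CE}\bigl((P^{-*}\otimes\LL_{n})\circ_{c\Lie_{n}}\Sigma^{1-n}\mathfrak{h}\bigr),
\end{equation*}
and it remains to identify the inner $c\Lie$-algebra. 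Because $\LL_{n} = \Sigma^{n-1}\Lie_{n}$ is the shifted regular right module, its $\Lie_{n}$-part contributes $\LL_{n}\circ_{\Lie_{n}}\Sigma^{1-n}\mathfrak{h}\cong\mathfrak{h}$, hence $P^{-*}\otimes\Sigma^{n-1}(V\oplus\Bbbk\uni)$, exactly paralleling the non-unital computation recalled in Section~\ref{sec:fact-homol} that recovers Knudsen's theorem.

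Finally I would account for the arity-$0$ (unit) operations, which is precisely where the word \emph{unital} enters. By the rule of Section~\ref{sec:right-ucom-module}, inserting the central element $\uni\in\mathfrak{h}$ into $\GG_{P}^{\vee}$ annihilates any factor carrying a genuine bracket and otherwise evaluates $\varepsilon$ on the corresponding element of $P^{-*}$. Chasing this through the Heisenberg bracket converts the central term $\delta_{ij}\uni$ of $[p\otimes x_{i},q\otimes\xi_{j}]$ into $\pm\varepsilon(pq)\,\delta_{ij}$ times the distinguished central generator. Thus the inner $c\Lie$-algebra is $P^{-*}\otimes\Sigma^{n-1}V$ with the Heisenberg bracket induced by the Poincaré pairing, its central element playing the role of the unit, and $\GG_{P}^{\vee}\circ_{u\Pois_{n}}A$ is identified with the unital Chevalley--Eilenberg complex of this algebra; taking homology gives the claim.

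I expect the main obstacle to be the junction of the last two paragraphs: one must verify that $\bar{S}^{c}$ really commutes with $\circ_{c\Lie_{n}}$ \emph{once the arity-$0$ operations are present} (not merely with $\circ_{\Lie_{n}}$), and that the $\varepsilon$-contraction of the unit reproduces exactly the Heisenberg bracket on $P^{-*}\otimes\Sigma^{n-1}V$ with the correct Koszul signs, so that the differential induced on the relative composition product coincides on the nose with the unital CE differential.
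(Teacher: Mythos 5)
Your proposal is correct and follows essentially the same route as the paper, whose proof likewise reduces to the non-unital induced-algebra computation of~\cite[Lemma~5.2]{Idrissi2016} and then tracks the unit insertions through the explicit right $u\Com$-module structure of Section~\ref{sec:right-ucom-module}. The only caveat is your intermediate display $\GG_{P}^{\vee}\circ_{c\Lie_{n}}\Sigma^{1-n}\mathfrak{h}\cong C_{*}^{CE}\bigl((P^{-*}\otimes\LL_{n})\circ_{c\Lie_{n}}\Sigma^{1-n}\mathfrak{h}\bigr)$, which is ill-typed as written: $P^{-*}\otimes\LL_{n}$ admits no right $c\Lie_{n}$-module structure compatible with that of $\GG_{P}^{\vee}$ (inserting $\uni$ applies $\varepsilon$ and \emph{drops} a symmetric factor, so it is not $S^{c}$ of any map of right $\Lie_{n}$-modules), hence $S^{c}$ commutes only with $\circ_{\Lie_{n}}$ and the arity-$0$ relations must be imposed afterwards as the quotient identifying $w\wedge(x\otimes\uni)$ with $\varepsilon(x)\,w$ --- which is exactly the verification your final paragraph carries out, and is the same point the paper settles by citing the unit-insertion rules.
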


This unital Chevalley--Eilenberg complex is given by
\begin{equation}
  \GG_{P}^{\vee} \circ_{u\Pois_{n}} A = \bigl( S^{c}(P^{-*} \otimes \Sigma^{n} V), d_{CE} \bigr)
\end{equation}
Here the shifted Lie bracket of $V$ (and hence the differential $d_{CE}$) can produce a unit.
In this case, we apply $\varepsilon : P \to \R$ to the corresponding factor, and the result is identified with the counit of $S^{c}(-)$, i.e., the empty tensor.

\begin{proof}
  This is almost identical to the case of the universal enveloping algebra of a Lie algebra (with no central element) from~\cite{Idrissi2016} (see Section~\ref{sec:fact-homol}).
  The Lie bracket cannot produce a product of two elements of $V$, only a unit.
  Therefore we just need to verify what happens to the unit in the isomorphism of~\cite[Lemma~5.2]{Idrissi2016}, which is part of Section~\ref{sec:right-ucom-module}.
\end{proof}

\begin{proposition}
  \label{prop:derived-vs-underived}
  The factorization homology $\int_{M} A \simeq \GG_{P}^{\vee} \circ_{u\Pois_{n}}^{\mathbb{L}} A$ of the symplectic Poisson $n$-algebra $A$ is quasi-isomorphic to $\GG_{P}^{\vee} \circ_{u\Pois_{n}} A$.
\end{proposition}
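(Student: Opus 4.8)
The plan is to resolve the algebra $A$ rather than the right module $\GG_P^\vee$, exactly as in the computation of the derived enveloping algebra. By Theorem~\ref{thm:main} the cobar construction $\Omega_\kappa A^\ashk$ is a cofibrant replacement of $A$ in the semi-model category of $u\Pois_n$-algebras. Since we work over a field of characteristic zero, the operad $u\Pois_n$ is $\Sigma_*$-cofibrant, so the relative composition product against a cofibrant algebra computes the derived one; this is the same application of Fresse's preservation theorems used in Proposition~\ref{prop:derived-u}. Hence
\begin{equation*}
  \int_M A \simeq \GG_P^\vee \circ^{\mathbb{L}}_{u\Pois_n} A \simeq \GG_P^\vee \circ_{u\Pois_n} \Omega_\kappa A^\ashk,
\end{equation*}
and it suffices to prove that the map induced by $f_\varkappa : \Omega_\kappa A^\ashk \xrightarrow{\sim} A$ is a quasi-isomorphism after applying $\GG_P^\vee \circ_{u\Pois_n} -$.

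Both $A$ and $\Omega_\kappa A^\ashk$ are free symmetric algebras $S(\Sigma^{1-n}\mathfrak{g})$ on a $c\Lie_n$-algebra $\mathfrak{g}$: one takes $\mathfrak{g} = V_+ = V \oplus \R\uni$ abelian for $A$, and $\mathfrak{g} = c\Lie_n(\Sigma^{-1}\bar{S}^c(\Sigma^n V))$ with differential $d_0 + d_2$ for the cobar construction, as already observed in Section~\ref{sec:poisson-case}. I would then invoke Lemma~\ref{lem:CE}, applied to each of these algebras, to identify $\GG_P^\vee \circ_{u\Pois_n} -$ with the unital Chevalley--Eilenberg complex $(S^c(P^{-*}\otimes \Sigma^n \mathfrak{g}), d_{CE})$. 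Under this identification $f_\varkappa$ becomes the map of unital CE complexes induced by the abelianization $\mathfrak{g} \to V_+$, which is a quasi-isomorphism of $c\Lie_n$-algebras: this is precisely the classical Koszul-duality statement established inside the proof of Proposition~\ref{prop:a-koszul}, tensored with $P^{-*}$.

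To conclude I would run a spectral-sequence argument. First filter both unital CE complexes by the number of tensor factors in $S^c$. The differential $d_{CE}$ splits into a \emph{naive} part, coming from the internal differential of $\mathfrak{g}$ and from brackets whose output lands back in $\mathfrak{g}$, and a \emph{unital correction}, which appears when a bracket outputs the unit: in that case one deletes a factor and applies $\varepsilon : P \to \R$ to the corresponding $P$-component. The correction strictly lowers the word-length filtration, so on the $E^0$-page only the naive differential survives, where the functor is the ordinary (non-unital) complex $S^c(P^{-*}\otimes \Sigma^n -)$ applied to $\mathfrak{g} \to V_+$. Since $M$ is closed, $P$ is finite-dimensional, and over a field the Künneth theorem shows $P^{-*}\otimes \Sigma^{n-1}\mathfrak{g} \to P^{-*}\otimes \Sigma^{n-1}V_+$ is a quasi-isomorphism; applying $S^c(-)$, again via Künneth, gives an isomorphism on $E^1$. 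The filtrations are exhaustive and bounded below, so convergence yields that $f_\varkappa$ induces a quasi-isomorphism, as desired.

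The main obstacle is the careful bookkeeping of the unital correction term in $d_{CE}$: one must verify both that it strictly decreases the chosen filtration (so that it drops out on $E^0$) and that the filtration is compatible with $f_\varkappa$. The latter holds because, just as in the proof of Theorem~\ref{thm:main}, $f_\varkappa$ respects the weight in $V$ up to strictly lower-order terms. A secondary verification is that the abelianization $c\Lie_n(\Sigma^{-1}\bar{S}^c(\Sigma^n V)) \to V_+$ really is the quasi-isomorphism computed in Proposition~\ref{prop:a-koszul} after tensoring with $P^{-*}$; this is routine, since both $\bar{S}^c$ and $- \otimes P^{-*}$ preserve quasi-isomorphisms over a field.
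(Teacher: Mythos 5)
Your strategy is the paper's own: resolve $A$ by $\Omega_{\kappa}A^{\ashk}$, identify $\GG_{P}^{\vee} \circ_{u\Pois_{n}} (-)$ of both sides with unital Chevalley--Eilenberg-type complexes, and compare via a filtration spectral sequence. In fact your word-length filtration \emph{is} the paper's filtration by $i-j$ (number of Lie brackets minus number of inner tensors), up to reindexing: each outer $S^{c}$-factor carries one Lie word on inner words and so contributes exactly $-1$ to $i-j$. Finishing with the comparison theorem at $E^{1}$ (both filtrations exhaustive and bounded below) is legitimate and arguably cleaner than the paper's two-step filtration on the target, since no later differentials need to be identified.

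Three points need repair, though none requires a new idea. First, and most seriously, your description of $E^{0}$ is wrong as stated: the ``naive'' bracket terms also strictly lower the word length (two outer factors merge into one), so they die on the associated graded just like the unital correction; only the word-length-preserving internal part --- $d_{2}$ and the differential of $P$ --- survives. This matters because your Künneth computation of $E^{1}$ as $S^{c}$ of a homology is only valid for the corrected $E^{0}$: if the bracket terms survived, $E^{1}$ would be a Chevalley--Eilenberg homology, which is not $S^{c}(H(-))$, and the step would fail. Relatedly, the unital CE complex is built on $\mathfrak{g}/\uni$, not on $\mathfrak{g}$ (the central element is identified with the empty word after applying $\varepsilon$), so the $E^{0}$-pages are $S^{c}(P^{-*} \otimes \Sigma L\bar{S}^{c}(\Sigma^{n}V))$ with $d_{2}$ and $S^{c}(P^{-*} \otimes \Sigma^{n}V)$ with zero bracket differential, not complexes having $\uni$ as a generator. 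Second, $V_{+}$ is not abelian and $\mathfrak{g} \to V_{+}$ is not an abelianization: the bracket on $V_{+}$ is the Heisenberg one, $\{x_{i}, \xi_{j}\} = \delta_{ij}\uni$. What Proposition~\ref{prop:a-koszul} actually provides, and what the corrected $E^{0}$ actually needs, is the non-unital statement that $(L(\Sigma^{-1}\bar{S}^{c}(\Sigma^{n}V)), d_{2}) \to \Sigma^{n-1}V$ is a quasi-isomorphism; the unit and $d_{0}$ have already dropped out of the associated graded. Third, Lemma~\ref{lem:CE} is stated only for $A$; applying it to $\Omega_{\kappa}A^{\ashk}$ requires its (routine) extension to algebras of the form $u\Pois_{n} \circ_{c\Lie_{n}} \Sigma^{1-n}\mathfrak{g}$ via the cancellation rule $X \circ_{\PP} (\PP \circ Y) \cong X \circ Y$ --- which is precisely the explicit computation the paper carries out. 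With these corrections your argument becomes the paper's proof.
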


\begin{proof}
  As we are working with a derived composition product, we take a resolution of $A$ as a $u\Pois_{n}$-algebra.
  For this, we use the cobar construction of the Koszul dual algebra, $\Omega_{\kappa}A^{\ashk}$, described in Section~\ref{sec:kosz-prop-expl}.
  We then have:
  \begin{equation}
    \int_{M} A \simeq \GG_{P}^{\vee} \circ_{u\Pois_{n}} \Omega_{\kappa}A^{\ashk}.
  \end{equation}

  The cobar construction $\Omega_{\kappa}A^{\ashk}$ is a quasi-free $u\Pois_{n}$-algebra on the Koszul dual $qA^{\ashk}$, with some differential.
  Therefore, by the cancellation rule for relative products over operad ($X \circ_{\PP} (\PP \circ Y) = X \circ Y$), we obtain that, as a graded module,
  \begin{equation}
    \GG_{P}^{\vee} \circ_{u\Pois_{n}} \Omega_{\kappa}A^{\ashk} \cong \bigl( \GG_{P}^{\vee} \circ qA^{\ashk}, d_{\Omega} \bigr),
  \end{equation}
  with a differential induced by the differential of the cobar construction.

  The Koszul dual of $qA$ is $qA^{\ashk} = \Sigma^{1-n} \bar{S}^{c}(\Sigma^{n} V)$, where $V = \R \langle x_{i}, \xi_{j} \rangle$ is the graded vector space of generators (see Lemma~\ref{lem:dual-qa}).
  Using the explicit form of the right module $\GG_{P}^{\vee}$ found in Section~\ref{sec:right-ucom-module}, we then find that:
  \begin{equation}
    \int_{M} A \simeq \bigl( S^{c}(P^{-*} \otimes L\bar{S}^{c}(\Sigma^{n} V)), d_{CE} + d_{0} + d_{2} \bigr).
  \end{equation}

  Let us now write down explicit formulas for the three summands of the differential.
  As there are two instances of the cofree cocommutative coalgebra appearing, we have to be careful with our notation.
  We will write $\wedge$ for the tensor of the outer coalgebra, and $\vee$ for the tensor of the inner coalgebra.
  Strictly speaking, we need to consider only elements that are invariant under the symmetric group actions.
  We will consider all elements, and check that formulas are actually well-defined when passing to invariants.
  The three parts of the differentials are:
  \begin{itemize}
    \item Given $x_{1}, \dots, x_{k} \in P$ and $Y_{1}, \dots, Y_{k} \in L\bar{S}^{c}(\Sigma^{n} V)$, we have
          \begin{equation}
            d_{CE}(x_{1} Y_{1} \wedge \dots \wedge x_{k} Y_{k}) = \sum_{i < j} \pm x_{1} Y_{1} \wedge \dots \wedge x_{i}x_{j} [Y_{i}, Y_{j}] \wedge \dots \wedge \widehat{x_{j} Y_{j}} \wedge \dots \wedge x_{k} Y_{k}.
          \end{equation}
    \item The differential $d_{2}$ is defined on the inner $\bar{S}^{c}(\Sigma^{n} V)$, extended to a derivation of $L\bar{S}^{c}(V)$, which is itself extended to the full complex as a coderivation:
          \begin{equation}
            d_{2}(v_{1} \vee \dots \vee v_{k}) = \frac{1}{2} \sum_{\substack{i+j=k\\(\mu,\nu)\in \mathrm{Sh}_{i,j}}} \pm [v_{\mu(1)} \vee \dots \vee v_{\mu(i)}, v_{\nu(1)} \vee \dots \vee v_{\nu(j)}],
          \end{equation}
          where the inner sum is over all $(i,j)$-shuffles.
          (This is the differential of the bar/cobar resolution of the abelian Lie algebra $\Sigma^{n-1} V$).
    \item The differential $d_{0}$ is similarly defined on $\bar{S}^{c}(\Sigma^{n} V)$ and extended to the full complex by (for $X$ a basis element):
          \begin{equation}
            d_{0}(X) =
            \begin{cases}
              -\uni, & \text{if } X = \Sigma^{n} x_{i} \vee \Sigma^{n} \xi_{i} \text{ for some } i; \\
              0      & \text{otherwise}.
            \end{cases}
          \end{equation}
          Note that the unit is appearing here.
          If the unit is inside a Lie bracket, the result is zero ($\uni$ is central).
          Otherwise, we have to apply $\varepsilon : P \to \R$ to the corresponding element of $P$ in the outer $S^{c}(-)$, and this factor disappears (it is replaced with a real coefficient).
  \end{itemize}

  We can project this complex to
  \begin{equation}
    \GG_{P}^{\vee} \circ_{u\Pois_{n}} A = \bigl( S^{c}(P^{-*} \otimes \Sigma^{n} V), d_{CE} \bigr),
  \end{equation}
  i.e., the Chevalley--Eilenberg complex (with constant coefficients) of the $c\Lie$ algebra $P^{-*} \otimes \Sigma^{n-1} V$.
  The projection from $\GG_{P}^{\vee} \circ_{u\Pois_{n}} \Omega_{\kappa}A^{\ashk}$ is compatible with the differential.

  Let $i$ be the number of Lie brackets in an element of the complex, and $j$ be the number of inner tensors ($\vee$).
  We then observe that $d_{2}$ preserves the difference $i-j$, while $d_{CE}$ and $d_{0}$ increase them by one.
  We can thus filter our first complex by this number to obtain what we will call the ``first spectral sequence''.
  The second complex, $\GG_{P}^{\vee} \circ_{u\Pois_{n}} A = \bigl( S^{c}(P^{-*} \otimes \Sigma^{n-1} V), d_{CE} \bigr)$, is also filtered, with the unit in filtration level zero and the rest in filtration level one.
  This yields a ``second spectral sequence''.
  The projection is compatible with this filtration, hence we obtain a morphism from the first spectral sequence to the second one.

  On the $E^{0}$ page of the first spectral sequence, only $d_{2}$ remains.
  Recall that $d_{2}$ is exactly the differential of the bar/cobar resolution $\Sigma^{-1}L\bar{S}^{c}(\Sigma^{n} V) \xrightarrow{\sim} \Sigma^{n-1} V$ of the abelian Lie algebra $\Sigma^{n-1} V$.
  Thus on the $E^{1}$ page of the spectral sequence, we obtain an isomorphism of graded modules from the first spectral sequence to the second.
  The differential $d_{CE}$ of the first spectral sequence vanishes, and the differential $d_{0}$ precisely correspond to part of the ``unital'' Chevalley--Eilenberg differential of the second spectral sequence.
  Hence we find that the projection $\GG_{P}^{\vee} \circ_{u\Pois_{n}} \Omega_{\kappa}A^{\ashk} \to \GG_{P}^{\vee} \circ_{u\Pois_{n}} A$ is a quasi-isomorphism.
\end{proof}

\begin{proposition}
  \label{prop:final-result}
  Let $A$ be a symplectic $n$-Poisson algebra (Definition~\ref{def:sympl-poisson}) and let $M$ be a simply connected smooth framed manifold of dimension at least four.
  Then the homology of $\int_{M} A$ is one-dimensional.
\end{proposition}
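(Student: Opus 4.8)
The plan is to combine Proposition~\ref{prop:derived-vs-underived} with a direct computation of the resulting unital Chevalley--Eilenberg complex, which I will identify with the Koszul complex of a nondegenerate pairing. First I would invoke Proposition~\ref{prop:derived-vs-underived} to replace $\int_M A$ by the strict relative composition product
\begin{equation}
  \GG_P^\vee \circ_{u\Pois_n} A = \bigl( S^c(P^{-*} \otimes \Sigma^n V), d_{CE} \bigr),
\end{equation}
the unital Chevalley--Eilenberg complex of the $c\Lie$-algebra $\mathfrak{g} = P^{-*} \otimes \Sigma^{n-1} V$, where $V = \R\langle x_i, \xi_j\rangle$. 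The only nonzero brackets come from $\{x_i, \xi_j\} = \delta_{ij}\uni$, so the bracket of $\mathfrak{g}$ is the bilinear map $\omega$ sending $(e \otimes x_i)\otimes(f \otimes \xi_j)$ to $\delta_{ij}\,\varepsilon(ef)\,\uni$ (and vanishing on the $x$--$x$ and $\xi$--$\xi$ sectors), valued in the central unit. The crucial point is that $\omega$ is a \emph{nondegenerate} pairing between the sector $H_x = P^{-*} \otimes \Sigma^n\langle x_i\rangle$ and the sector $H_\xi = P^{-*} \otimes \Sigma^n\langle \xi_j\rangle$: this is exactly the nondegeneracy of the Poincaré duality pairing $\varepsilon(ef)$ combined with that of $\delta_{ij}$. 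Since $M$ is closed, $P$ is finite-dimensional, so $H = H_x \oplus H_\xi$ is finite-dimensional and $\omega$ is a perfect pairing.

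Next I would diagonalize $\omega$ into hyperbolic pairs. Choosing a basis of $H_x$ together with its $\omega$-dual basis of $H_\xi$, we obtain a finite ``Darboux'' decomposition $H = \bigoplus_s \langle p_s, q_s\rangle$ with $\omega(p_s, q_t) = \delta_{st}$ and $\omega$ vanishing on all other pairs of basis vectors. Because $d_{CE}$ only contracts a pair of factors through $\omega$, it connects $p_s$ only to $q_s$; hence it respects the induced tensor factorization and
\begin{equation}
  \bigl( S^c(H), d_{CE}\bigr) \cong \bigotimes_s \bigl( S^c(\langle p_s, q_s\rangle), d_s\bigr)
\end{equation}
as a finite tensor product of chain complexes. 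The main computation is then the homology of a single hyperbolic factor. Since $d_{CE}$ has degree $-1$ and $\omega(p_s, q_s)$ lands in the degree-zero unit, the degrees of $p_s$ and $q_s$ sum to $1$; in particular exactly one of them is even and one is odd, so $S^c(\langle p_s, q_s\rangle) \cong \R[p_s^{\mathrm{ev}}] \otimes \Lambda[q_s^{\mathrm{odd}}]$. A direct inspection shows that $d_s$ sends $p^{\vee a} \vee q$ to a nonzero scalar multiple of $p^{\vee(a-1)}$ for $a \ge 1$ and annihilates the pure powers $p^{\vee a}$ and the single generator $q$; working over a field of characteristic zero the relevant coefficients are invertible, so the kernel is spanned by the $p^{\vee a}$ together with $q$, the image is the span of all the $p^{\vee a}$, and the homology of each factor is one-dimensional.

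Finally, applying the Künneth theorem over the field $\R$ to the finite tensor product yields
\begin{equation}
  H_*\Bigl( \int_M A\Bigr) \cong \bigotimes_s H_*\bigl( S^c(\langle p_s, q_s\rangle), d_s\bigr) \cong \bigotimes_s \R \cong \R,
\end{equation}
which is one-dimensional, as claimed.

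I expect the main obstacle to be essentially bookkeeping: checking that the Koszul signs in $d_{CE}$ are compatible with the hyperbolic decomposition so that the differential genuinely block-diagonalizes, and confirming that each per-pair factor is the stated mixed polynomial--exterior complex regardless of the parities forced by $n$ and by the grading of $P$. The conceptual input---nondegeneracy of $\omega$ (Poincaré duality) and characteristic zero (invertibility of the combinatorial coefficients and the Künneth formula)---is precisely what forces the homology to collapse to a single dimension.
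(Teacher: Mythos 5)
Your proposal is correct and takes essentially the same route as the paper: reduce via Proposition~\ref{prop:derived-vs-underived} and Lemma~\ref{lem:CE} to the unital Chevalley--Eilenberg complex of $\mathfrak{g}_{P,V} = P^{-*} \otimes \Sigma^{n-1} V$, note that Poincar\'e duality makes the scalar-valued pairing non-degenerate, pick dual bases, and collapse the resulting Koszul-type complex to one dimension. The only difference is presentational: where the paper identifies the complex globally with an ``algebraic de Rham complex'' and invokes its acyclicity via the explicit homotopy $h(dx_i) = x_i$, you factor it into hyperbolic two-generator complexes and apply K\"unneth --- which is exactly the standard proof of that same acyclicity, and your representative $\bigwedge_s q_s$ of the surviving class matches the paper's $\bigwedge_j x_j^*$.
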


\begin{proof}
  Thanks to Proposition~\ref{prop:derived-vs-underived}, we only need to compute the homology of $\GG_{P}^{\vee} \circ_{u\Pois_{n}} A$.
  Let us use the explicit description from Lemma~\ref{lem:CE} as the unital Chevalley--Eilenberg complex  of the $c\Lie$-algebra
  \begin{equation}
    \mathfrak{g}_{P,V} \coloneqq P^{-*} \otimes \Sigma^{n-1} V.
  \end{equation}
  There is a pairing $\langle-,-\rangle : \mathfrak{g}_{P,V}^{\otimes 2} \to \R$ given by $xv \otimes x'v' \mapsto \varepsilon_{P}(xx') \cdot \{ v, v' \}$.
  We have the following isomorphism of chain complexes:
  \begin{equation}
    \GG_{P}^{\vee} \circ_{u\Pois_{n}} A \cong \biggl( \bigoplus_{i \ge 0} \bigl( (\Sigma \mathfrak{g}_{P,V})^{\otimes i} \bigr)_{\Sigma_{i}}, d_{CE} \biggr)
  \end{equation}
  where
  \begin{equation}
    d_{CE}(\alpha_{1} \wedge \dots \wedge \alpha_{k}) = \sum_{i < j} \pm \langle \alpha_{i}, \alpha_{j} \rangle \cdot \alpha_{1} \wedge \dots \widehat{\alpha}_{i} \dots \widehat{\alpha}_{j} \dots \wedge \alpha_{k}.
  \end{equation}

  Recall that since $P$ is a Poincaré commutative dg-algebra, the pairing induced by $\varepsilon_A$ is by definition non-degenerate.
  Moreover, the pairing on $V$ given by the bracket is clearly non-degenerate.
  It follows that the pairing $\langle -, -\rangle$ defined on $\mathfrak{g}_{P,V}$ is non-degenerate.
  If $\{a_i\}_{i \in I}$ is a graded basis of $P$ and $\{a_i^\vee\}_{i \in I}$ is its dual basis, then $\{ a_i \otimes x_j, a_i \otimes \xi_j \}_{i \in I, 1 \leq j \leq D}$ is a graded basis of $\mathfrak{g}_{P,V}$ and its dual basis is $\{ a_i^\vee \otimes \xi_j, a_i^\vee \otimes x_j \}_{i \in I, 1 \leq j \leq D}$.

  To lghten up the notation, let us write $\{ y_k \}_{1 \leq k \leq r}$ for the graded basis of $\mathfrak{g}_{P,V}$ found above and let $\{ y_k^\vee \}$ be its dual basis under the pairing.

  We can then identify $\GG_{P}^{\vee} \circ_{u\Pois_{n}} A$ with the ``algebraic de Rham complex'':
  \begin{equation}
    \Omega^{*}_{adR}(\R^{r}) = \bigl( S(y_{1}, \dots, y_{r}) \otimes \Lambda(dy_{1}, \dots, dy_{r}), d_{dR} = \sum_{k} \frac{\partial}{\partial y_{k}} \cdot dy_{k} \bigr).
  \end{equation}
  Note that if all the variables $y_k$ had degree zero then this would be isomorphic to the algebra $A_{PL}(\Delta^{r}) \otimes_{\Q} \R$ of piecewise polynomial real forms on $\Delta^{r}$.
  There is an isomorphism (up to a degree shift and reversal) given by:
  \begin{equation}
    \begin{aligned}
      \biggl( \bigoplus_{i \ge 0} \bigl( (\Sigma \mathfrak{g}_{P,V})^{\otimes i} \bigr)_{\Sigma_{i}}, d_{CE} \biggr)
       & \xrightarrow{\cong} \Omega^{*}_{adR}(\R^{r})
      \\
      y_{k_{1}} \wedge \dots \wedge y_{k_{\alpha}} \wedge y_{l_{1}}^{*} \wedge \dots \wedge y_{l_{\beta}}^{*}
       & \mapsto y_{k_{1}} \dots y_{k_{\alpha}} \cdot \prod_{\mathclap{\substack{1 \le l \le r \\ l \not\in \{l_{1}, \dots, l_{\beta}\}}}} dy_{l}
    \end{aligned}
  \end{equation}
  For example if $r=3$, then the isomorphism sends $y_{1} \wedge y_{2}^{*}$ to $y_{1} dy_{1} dy_{3}$, i.e., the isomorphism sends a form to its Hodge star complement.

  The algebraic de Rham complex is a particular example of a Koszul complex and is therefore acyclic.
  There is an explicit homotopy given by $h(dy_{i}) = y_{i}$, $h(y_{i}) = 0$ and extended suitably as a derivation.
  In particular, a representative of the only homology class is the unit of the de Rham complex, which under our identification is $\bigwedge_{j = 1}^{r} y_{j}^{*}$.
\end{proof}

\begin{remark}
  From a physical point of view, this result is satisfactory: when one wants to compute expected values of observables, one wants a number.
  The next best thing to a number is a closed element in a complex whose homology is one-dimensional.
  We thank T.~Willwacher for this perspective.
\end{remark}

\begin{remark}
  \label{rem:markarian}
  The above proof appears to be similar to the computation of Markarian~\cite{Markarian2017} for the Weyl $n$-algebra $\mathcal{W}_{n}^{h}(D)$, which is an algebra over the operad $C_{*}(\mathsf{FM}_{n}; \R[[h, h^{-1}]])$, where $\mathsf{FM}_{n}$ is the Fulton--MacPherson operad.
  However, we do not know the precise relationship between $A_{n;D}$ and $\mathcal{W}_{n}^{h}(D)$.
  Curved Koszul duality was conjectured to apply for this computation by Markarian~\cite{MarkarianTanaka2015}.
  Moreover, Döppenschmitt~\cite{Doeppenschmitt2018} recently released a preprint containing an analogous computation for a twisted version of $A$, using a ``physical'' approach based on AKSZ theory.
  Our approach is however different from these two approaches.
  It is also in some sense more general, as we should be able to compute the factorization homology of $M$ with coefficients in any Koszul $u\Pois_{n}$-algebra, e.g., an algebra of the type $S(\Sigma^{1-n} \mathfrak{g})$ where $\mathfrak{g}$ is a Koszul $c\Lie$-algebra.
  Finally, let us note that previous results of Getzler~\cite{Getzler1999}, regarding the computation of the Hodge polynomials (with compact support) of configuration spaces of quasi-projective varieties over a base, involve similar techniques.
\end{remark}

\begin{remark}
  Using the results from~\cite{CamposIdrissiLambrechtsWillwacher2018}, we hope to be able to compute factorization homology of compact manifolds with boundary with coefficients in $A$.
\end{remark}

\printbibliography
\end{document}